\tikzset{>=latex}
\newtheorem{lemm}{Lemma}
\newtheorem{propo}{Proposition}
\newtheorem{remark}{Remark}
\newtheorem{definition}{Definition}
\newtheorem{assumption}{Assumption}
\newtheorem{corollary}{Corollary}
\newenvironment{proof}[1][Proof]{\noindent\textbf{#1.} }{\ \rule{0.5em}{0.5em}}
\begin{document}

        \setlength\abovedisplayskip{6pt}
        \setlength\belowdisplayskip{6pt}
        \setlength\abovedisplayshortskip{6pt}
        \setlength\belowdisplayshortskip{6pt}
        \allowdisplaybreaks
        \setlength{\parindent}{1em}
        \setlength{\parskip}{-0em}  
        \addtolength{\oddsidemargin}{5pt}      
        
        \begin{frontmatter}
                
                \title{Output Feedback  Periodic Event-Triggered and Self-Triggered Boundary Control of Coupled $2\times 2$ Linear Hyperbolic PDEs} 
    \thanks[t1]{A preliminary version of this work appears in the American Control Conference, ACC 2025. This work was funded by the NSF CAREER Award CMMI-2302030 and  the NSF grant CMMI-2222250.}

\thanks[footnoteinfo]{Corresponding author E.~Somathilake Tel. +1(858)-319-6429.}

\author[mae]{Eranda Somathilake\thanksref{footnoteinfo}}\ead{esomathilake@ucsd.edu},  \author[ece]{Bhathiya  Rathnayake,}\ead{brm222@ucsd.edu} \ and\  
\author[mae]{Mamadou Diagne}\ead{mdiagne@ucsd.edu }               

\address[mae]{Department of Mechanical and Aerospace Engineering, University of California, San Diego, La Jolla, CA 92093, USA}

\address[ece]{Department of Electrical and Computer Engineering, University of California, San Diego, La Jolla, CA 92093, USA}  


\begin{abstract}
In this paper, we expand recently introduced  observer-based periodic event-triggered  control (PETC) and self-triggered control (STC) schemes for reaction-diffusion PDEs to boundary control of $2\times2$ coupled hyperbolic PDEs in canonical form and with anti-collocated measurement and actuation processes. The class of problem under study governs transport phenomena arising in water management systems, oil drilling, and traffic flow, to name a few. Relative to the state of the art in observer-based event-triggered control of hyperbolic PDEs, our contribution goes two steps further by proposing observer-based PETC and STC for the considered class of systems. These designs arise from a non-trivial redesign of an existing continuous-time event-triggered control (CETC) scheme. PETC and STC eliminate the need for constant monitoring of an event-triggering function as in CETC; PETC requires only periodic evaluations of the triggering function for event detection, whereas STC is a predictor-feedback that anticipates the next event time at the current event exploiting continuously accessible output measurements. The introduced resource-aware designs act as input holding mechanisms allowing for the update of the input signal only at events. Subject to the designed boundary output feedback PETC and STC control laws characterized by a set of event-trigger design parameters, the resulting closed-loop systems, which are inherently Zeno-free by design, achieve  exponential convergence to zero in the spatial $L^2$ norm. We illustrate the feasibility of the approach by applying the control laws to the linearized Saint-Venant model, which describes the dynamics of shallow water waves in a canal and is used to design flow stabilizer feedback laws via gate actuation. The provided simulation results illustrate the proposed theory.  
\end{abstract}
\begin{keyword}
Backstepping control design, coupled hyperbolic systems, observer design,   periodic event-triggered control, self-triggered control, Saint-Venant model. \end{keyword}
                
      \end{frontmatter}

\section{Introduction}

Hyperbolic PDEs provide a mathematical description  of transport processes, elucidating the movement of fluids, biological systems, environmental systems, chemical engineering, and heat transfer, to name a few. Emanating from  first-principles, these PDEs are useful to the estimation, the prediction, and the control of various systems such as open channel fluid flow \cite{2003Boundary,di2012backstepping, diagne2017backstepping,diagne2017control,diagne2012lyapunov,strecker2022,bojan2022}, blood flow dynamics \cite{bekiaris20221},   traffic systems \cite{yu2019traffic},     operation of natural gas pipeline networks \cite{hari2021operation}, to cite a variety.
\subsection{Boundary control of hyperbolic PDEs}
Over the past few decades, the field has witnessed  major developments toward the conception of stabilizing boundary feedback laws for coupled linear hyperbolic systems.  An important  advancement occurred with the introduction of a controller that achieves local exponential stabilization of the Saint-Venant model \cite{coron1999lyapunov}. Subsequent contributions by \cite{bastin2010further, bastin2011boundary} , as well as \cite{Vazquez2011}, have played a key role in formulating a quadratic Lyapunov candidate specifically designed for $2\times 2$ linear hyperbolic systems in canonical form. The result in  \cite{Vazquez2011}, which pertains to PDE backstepping design, is extended to an $n+1$ system in \cite{Meglio2013}. The generalization of \cite{Vazquez2011} in \cite{Meglio2013}, led to the application of PDE backstepping design to a broad range of applications including 
$2+1$ systems such as  two-phase slugging processes in  oil drilling \cite{di2012backstepping}, drift-flux modeling in oil drilling \cite{aarsnes2014control}, and coupled water-sediment dynamics in river breaches \cite{diagne2017backstepping}. Backstepping design has been used to stabilize a $3+1$ multi-class traffic model  \cite{burkhardt2021stop} and the $2+2$ bi-layer Saint-Venant model  \cite{diagne2017control}. In light of the results in \cite{Meglio2013} and \cite{Hu2016}, the design of adaptive observers for general linear hyperbolic PDEs was proposed in \cite{Anfinsen2016} and \cite{anfinsen2017estimation}, respectively, and adaptive boundary control strategies in \cite{anfinsen2019adaptive}.  In the contributions reported above, global exponential stability is principally established with respect to the $L^2$ norm of the state, and both full-state feedback and observer-based control design are considered. Recently, leveraging DeepOnet—a Neural Operator (NO) approximation, \cite{wang2023neural}  uses Machine Learning to facilitate the  computation of gain kernel PDEs for coupled hyperbolic PDEs, and has even achieved Global Practical Exponential Stability when the output feedback law including the system's state is completely learned.  

\subsection{Event-based control of linear coupled hyperbolic systems}

The works mentioned above on hyperbolic PDEs rely on continuous-time control (CTC), which is often not feasible. A practical solution is sampled-data control, where the control input is updated according to a predetermined sampling schedule. However, the maximum allowable sampling interval of this schedule must be chosen conservatively to ensure that the control input is updated frequently enough. In a worst-case scenario, a high frequency of control updates is critical to prevent the closed-loop system from falling into an unstable manifold due to the open-loop operation between control updates. Alternatively, event-triggered control (ETC) introduces feedback into the control update tasks. Here, the control input is updated only when a certain condition related to the system states is met, which we call an \textit{event}. This feedback integration means the schedule of control update times is not restricted by a worst-case scenario. Control updates are triggered based on the current state rather than an unlikely worst-case scenario, leading to significantly sparser control updates compared to traditional sampled-data control methods.

The latest advancements in control systems theory have witnessed an unprecedented expansion of sampled-data \cite{fridman2012robust,karafyllis2018sampled,katz2022sampled, davo2018stability,karafyllis2017sampled,wang2022sampled} and event-based control applied to PDE systems. ETC for parabolic PDEs involving both static and dynamic triggering conditions can be found in  \cite{espitia2021event,katz2020boundary,rathnayake2021observer,rathnayake2022sampled,rathnayake2024observer,wang2022event}.  Dealing with event-based boundary control of hyperbolic systems, our current contributions align with works such as \cite{espitiaObserverbased2020,wang2022eventb,baudouin2023event,koudohode2022event,espitia2020event,espitiaTrafficFlowControl2022} with particular emphasis on \cite{espitiaObserverbased2020}, which proposes a globally exponentially convergent observer-based ETC for a $2\times 2$ hyperbolic system in the canonical form. Among these studies,  the authors of \cite{diagne2021event}  design a static triggering condition that achieves  exponential stabilization of  a scalar but nonlinear hyperbolic system describing parts flow in a highly re-entrant manufacturing plant. The model of concern is  often used to describe semi-conductors and chip assembly lines. The works \cite{espitia2020event,espitiaTrafficFlowControl2022} offer application of ETC designs to traffic systems. 

\subsection{Contributions}
This contribution presents  two major improvements to the state of the art of observer-based ETC for hyperbolic PDEs by removing the necessity of continuous monitoring of the triggering functions, an unavoidable step when implementing the regular ETC stabilizer (for instance \cite{espitiaObserverbased2020}), which we refer to as \emph{continuous-time} ETC (CETC). The PETC and STC designs introduced for a scalar reaction-diffusion PDE with explicit gain kernel functions \cite{rathnayake2023observer} are proven to apply to hyperbolic PDE systems of a higher level of complexity induced by coupled dynamics, leading to implicitly defined coupled gain kernel functions. The authors of \cite{peihan2024traffic} proposed \emph{full-state feedback} PETC and STC designs for a $2\times 2$ coupled hyperbolic traffic PDEs exploring the notion of a \textit{performance barrier} \cite{ong2023performance}, \cite{rathnayake2023prfmnce}. However, \emph{observer-based} PETC and STC designs for hyperbolic PDEs are still lacking, despite being crucial for practical control implementations in the context of flow and fluid systems for which distributed measurement of the state is often implausible. To address this significant gap, this work presents observer-based PETC and STC designs for $2\times 2$ coupled hyperbolic PDEs applied to the linearized Saint-Venant equations modeling water flow and level regulation problem via gate actuation. The PETC design arises from constructing a triggering function that requires only periodic evaluations. We achieve this by determining an upper bound for the CETC triggering function between two periodic evaluations. Simultaneously, we establish an upper bound for the permissible sampling period used for periodic evaluation of the triggering function. The STC design emerges from constructing a function that is positively lower bounded and depends on continuously available outputs. This function, when evaluated at an event time, determines the waiting time until the next event. We construct this function by determining bounds on several variables that appear in the CETC triggering function. 
Both PETC and STC are inherently free from Zeno behavior, and the well-posedness of the closed-loop system under both PETC and STC is provided. Furthermore, using Lyapunov arguments, we establish the exponential convergence of the closed-loop signals to zero in the $L^2$ spatial norm. The rapid development of  remote actuation systems for automated control of networks of irrigation canals, justifies the application example as preserving actuation and communication resources enables to expand the level of autonomy by drastically reducing energy and bandwidth consumption.

\textbf{\emph{Organization of the paper:}} 
Section~\ref{sec:problem_statement} presents an exponentially stabilizing boundary control law for a $2\times2$ linear hyperbolic system followed by its emulation for the event-triggering mechanisms as well as necessary preliminary results for the proposed control designs including the CETC design. Sections~\ref{sec:petc} and \ref{sec:stc} present the exponentially convergent PETC, and STC event-triggering mechanisms. Finally, we present the numerical simulations of the control strategies applied to the linearized Saint-Venant equations and concluding remarks in Sections~\ref{sec:model} and \ref{sec:conclusion}, respectively.

\textbf{\emph{Notation:}} Let $\mathbb{R}$ be the set of real numbers and $\mathbb{R}^+$ be the set of positive real numbers. Let $\mathbb{N}$ be the set of natural numbers including $0$. Define the constant $\ell \in \mathbb{R}^+$. By $L^2(0,\ell)$, we denote the equivalence class of Lebesgue measurable functions $f:[0,\ell]\rightarrow\mathbb{R}$ such that $\|f\|_{L^2((0,\ell);\mathbb{R})}=\left(\int_0^\ell|f(x)|^2\right)^{1/2}<\infty$. Define $\mathcal{C}^0(I;L^2((0,\ell);\mathbb{R}))$ as the space of continuous functions $u(\cdot,t)$ for an interval $I\subseteq\mathbb{R}^+$ such that $I\ni t\rightarrow u(\cdot,t)\in L^2((0,\ell);\mathbb{R})$. Also, for the equivalence class of Lebesgue measurable functions $\chi_1,\chi_2:[0,\ell]\rightarrow\mathbb{R}$, we define $\rVert\big(\chi_1,\chi_2\big)^T \rVert=\left(\|\chi_1\|_{L^2((0,\ell);\mathbb{R})}^2 + \|\chi_2\|_{L^2((0,\ell);\mathbb{R})}^2\right)^{1/2}$.
\maketitle


\section{\protect Preliminaries and problem formulation}\label{sec:problem_statement}

We consider the following $2\times 2$ linear hyperbolic  PDE system in the  canonical form where the independent variables $t\geq 0$ and $x\in[0,\ell]$ are time and space variables, respectively, and the PDE states $u(x,t)$ and $v(x,t)$ satisfy 
\begin{align}
\partial_t u(x,t) =&-\lambda_1 \partial_x u(x,t)+c_1(x) v(x,t), \label{sys_u} \\
\partial_t v(x,t)=&\lambda_2 \partial_x v(x,t)+ c_2(x) u(x,t),\label{sys_v}
\end{align} 
with boundary conditions 
\begin{align}
u(0,t) & = q v(0,t),\label{sys_BC1} \\
v(\ell,t) & =\rho u(\ell,t) + U(t).\label{sys_BC2}
\end{align}
Here, $U(t)$ is the continuous-time boundary control input, and $\lambda_1, \lambda_2\in \mathbb{R}^+$, $c_1(x),c_2(x)\in \mathcal{C}^0((0,\ell);\mathbb{R})$. Further, $q\neq0$ is the distal reflection term, and $\rho\neq0$ is the proximal reflection term. The initial conditions are chosen such that
\begin{align*}
(u^0, \; v^0)^T \in  L^2((0,\ell);\mathbb{R}^2).
\end{align*}
We make the following assumption on the reflection terms. 
\begin{assumption}\label{assum:reflection}\cite{espitiaObserverbased2020}
  The reflection terms are small enough such that the following inequality holds:
    \begin{align*}
        |\rho  q|\leq \frac{1}{2}.
    \end{align*}
\end{assumption}

Our goal here is to devise PETC and STC strategies to determine the event times at which the control input is updated. Prior to this, we first derive an observer-based continuous-time  boundary feedback law that is applied in a zero-order hold fashion between events determined by a continuous-time  triggering mechanism, leading to a CETC strategy. The design features an anti-collocated sensing and actuation setup, contrasting with the collocated setup considered in \cite{espitiaObserverbased2020}. Building upon the CETC design, we propose the PETC and STC strategies. The continuous-time  triggering function is converted into a periodic event-triggering function in the PETC design, and a mechanism to determine the next triggering time at the current triggering time is developed in the STC design.

\subsection{Continuous-time  output feedback control and its emulation for ETC}
In this subsection, we develop a continuous-time  backstepping output feedback control $U(t)$ capable of exponentially stabilizing the closed-loop system consisting of the plant \eqref{sys_u}-\eqref{sys_BC2} and an observer, using $v(0,t)$ as the available boundary measurement. Since the actuation and measurement are located at opposite boundaries, this setup is referred to as anti-collocated sensing and actuation, which differs from the configuration in \cite{espitiaObserverbased2020}, where both the actuation and measurement are located at the same boundary.

We design an  observer consisting of the copy of the plant plus some output injection terms with the observer states denoted by $(\hat{u},\hat{v})^T$. Defining the observer errors as 
\begin{align}\label{error_u}
&\tilde{u}:=u-\hat{u},\\\label{error_v}
&\tilde{v}:=v-\hat{v},
\end{align}
the following observer is proposed:
\begin{align}
\partial_t \hat{u}(x,t) =&-\lambda_1 \partial_x \hat{u}(x,t)+ c_1(x)\hat{v}(x,t)+p_1(x)\tilde{v}(0,t), \label{obs_u} \\
\partial_t \hat{v}(x,t)=&\lambda_2 \partial_x \hat{v}(x,t) + c_2(x)\hat{u}(x,t) + p_2(x)\tilde{v}(0,t),\label{obs_v}
\end{align}
with boundary conditions 
\begin{align}
\hat{u}(0,t) & = q v(0,t),\label{obs_BC1} \\
\hat{v}(\ell,t) & = \rho \hat{u}(\ell,t) + U(t),\label{obs_BC2}
\end{align}
and initial conditions such that
\begin{align*}
(\hat{u}^0, \; \hat{v}^0)^T \in  L^2((0,\ell);\mathbb{R}^2)
\end{align*}
The functions $p_1(x)$ and $p_2(x)$ are the observer output injection gains, which are to be determined through backstepping design to ensure the convergence of the estimated states to the plant states. It can be easily verified that the dynamics of the observer errors \eqref{error_u} and \eqref{error_v} satisfy
\begin{align}
\partial_t \tilde u(x,t) =&-\lambda_1 \partial_x \tilde{u}(x,t)+c_1(x)\tilde{v}(x,t) - p_1(x)\tilde{v}(0,t), \label{err_u} \\
\partial_t \tilde{v}(x,t)=&\lambda_2 \partial_x \tilde{v}(x,t)+ c_2(x)\tilde{u}(x,t) - p_{2}(x)\tilde{v}(0,t),\label{err_v}
\end{align}
with boundary conditions 
\begin{align}
\tilde u(0,t) & =0,\label{err_BC1} \\
\tilde v(\ell,t) & =\rho \tilde{u}(\ell,t),\label{err_BC2}
\end{align}
where the initial conditions satisfy
\begin{align*}
(\tilde{u}^0, \; \tilde{v}^0)^T \in  L^2((0,\ell);\mathbb{R}^2).
\end{align*}
Consider the following observer error backstepping transformations:
\begin{align}
\tilde{u}(x,t) & =\tilde{\alpha}(x,t)-\int_0^x P^{\alpha\alpha}(x, \xi) \tilde{\alpha}(\xi,t) d\xi\nonumber\\&\quad-\int_0^x P^{\alpha\beta}(x, \xi) \tilde{\beta}(\xi,t) d\xi,\label{err_backstepping1}\\
\tilde{v}(x,t) & =\tilde{\beta}(x,t)-\int_0^x P^{\beta\alpha}(x, \xi) \tilde{\alpha}(\xi,t) d\xi\nonumber\\&\quad-\int_0^x P^{\beta\beta}(x, \xi) \tilde{\beta}(\xi,t) d\xi,\label{err_backstepping2}
\end{align}
defined over the triangular domain $0 \leq \xi \leq x \leq \ell$, 
where the kernels $P^{\alpha\alpha},P^{\alpha\beta},P^{\beta\alpha},$ and $P^{\beta\beta}$ satisfy the following PDEs:
\begin{align}
\lambda_1\partial_xP^{\alpha\alpha}(x,\xi)+\lambda_1\partial_\xi P^{\alpha\alpha}(x,\xi)=&c_1(x)P^{\beta\alpha}(x,\xi)\label{p_alphaalpha},\\
\lambda_1\partial_xP^{\alpha\beta}(x,\xi)-\lambda_2\partial_\xi P^{\alpha\beta}(x,\xi)=&c_1(x)P^{\beta\beta}(x,\xi)\label{p_alphabeta},\\
\lambda_2\partial_xP^{\beta\alpha}(x,\xi)-\lambda_1\partial_\xi P^{\beta\alpha}(x,\xi)=&-c_2(x)P^{\alpha\alpha}(x,\xi)\label{p_betaalpha},\\
\lambda_2\partial_xP^{\beta\beta}(x,\xi)+\lambda_2\partial_\xi P^{\beta\beta}(x,\xi)=&-c_2(x)P^{\alpha\beta}(x,\xi)\label{p_betabeta},
\end{align}
with boundary conditions
\begin{align}
&P^{\alpha\alpha}(\ell,\xi)=\frac{1}{\rho}P^{\beta\alpha}(\ell,\xi),
&&P^{\alpha\beta}(x,x)=-\frac{c_1(x)}{\lambda_1+\lambda_2},\label{p_alphaalpha1__p_alphabetax}\\
&P^{\beta\beta}(\ell,\xi)=\rho P^{\alpha\beta}(\ell,\xi),
&&P^{\beta\alpha}(x,x)=\frac{c_2(x)}{\lambda_1+\lambda_2}.\label{p_betabeta1__p_betaalphax}
\end{align}
The output gain terms are chosen as
\begin{align*}
p_1(x)=&-\lambda_2P^{\alpha\beta}(x,0),\\
p_2(x)=&-\lambda_2P^{\beta\beta}(x,0).
\end{align*}          
Hence, the observer error system \eqref{err_u}-\eqref{err_BC2} is transformed into the following target system:
\begin{align}
\partial_t \tilde{\alpha}(x,t) =&-\lambda_1 \partial_x \tilde{\alpha}(x,t),\label{err_target_u} \\
\partial_t \tilde{\beta}(x,t)=&\lambda_2 \partial_x \tilde{\beta}(x,t),\label{err_target_v}
\end{align}
with boundary conditions 
\begin{align}
\tilde{\alpha}(0,t) & =0,\label{err_target_BC1} \\
\tilde{\beta}(\ell,t) & =\rho \tilde{\alpha}(\ell,t).\label{err_target_BC2}
\end{align}
The inverse transformations of \eqref{err_backstepping1},\eqref{err_backstepping2} are given by
\begin{align}
\tilde{\alpha}(x,t)=&\tilde{u}(x,t)+\int_0^xR^{uu}(x,\xi)\tilde{u}(\xi,t)d\xi\nonumber\\&+\int_0^xR^{uv}(x,\xi)\tilde{v}(\xi,t)d\xi,\label{err_backstepping1-inverse}\\
\tilde{\beta}(x,t)=&\tilde{v}(x,t)+\int_0^xR^{vu}(x,\xi)\tilde{u}(\xi,t)d\xi\nonumber\\&+\int_0^xR^{vv}(x,\xi)\tilde{v}(\xi,t)d\xi,\label{err_backstepping2-inverse}
\end{align}
defined over the triangular domain $0 \leq \xi \leq x \leq \ell$, where the kernels $R^{uu},R^{uv},R^{vu},$ and $R^{vv}$ satisfy the following PDEs:
\begin{align}
    \lambda_1 \partial_x R^{uu}(x,\xi) + \lambda_1 \partial_\xi R^{uu}(x,\xi)=&-c_2(\xi)R^{uv}(x,\xi),\label{r_uu}\\
    \lambda_1 \partial_x R^{uv}(x,\xi) - \lambda_2 \partial_\xi R^{uv}(x,\xi)=&-c_1(\xi)R^{uu}(x,\xi),\label{r_uv}\\
    \lambda_2 \partial_x R^{vu}(x,\xi) - \lambda_1 \partial_\xi R^{vu}(x,\xi)=&c_2(\xi)R^{vv}(x,\xi),\label{r_vu}\\
    \lambda_2 \partial_x R^{vv}(x,\xi) + \lambda_2 \partial_\xi R^{vv}(x,\xi)=&c_1(\xi)R^{vu}(x,\xi),\label{r_vv}
\end{align}
with boundary conditions
\begin{align}
    &R^{uu}(\ell,\xi)=\frac{1}{\rho}R^{vu}(\ell,\xi), 
    &&R^{uv}(x,x)=-\frac{c_1(x)}{\lambda_1+\lambda_2},\label{r_uu1__r_uvx}\\
    &R^{vv}(\ell,\xi)=\rho R^{uv}(\ell,\xi), 
    &&R^{vu}(x,x)=\frac{c_2(x)}{\lambda_1+\lambda_2}.\label{r_vv1__r_vux}
\end{align}
The well-posedness of the systems \eqref{p_alphaalpha}-\eqref{p_betabeta1__p_betaalphax} and \eqref{r_uu}-\eqref{r_vv1__r_vux} is established in \cite{Vazquez2011} (use the coordinate change $\bar{x}=\ell-\xi,\ \bar{\xi}=\ell-x$).

In order to derive a stabilizing control law via PDE backstepping considering the observer system \eqref{obs_u}-\eqref{obs_BC2}, the following backstepping transformations are introduced:
\begin{align}
\hat{\alpha}(x,t) =&\hat{u}(x,t)-\int_0^xK^{uu}(x,\xi)\hat{u}(\xi,t)d\xi \nonumber\\&-\int_0^xK^{uv}(x,\xi)\hat{v}(\xi,t)d\xi,\label{obs_backstepping_1}\\
\hat{\beta}(x,t)  =&\hat{v}(x,t)-\int_0^x K^{vu}(x, \xi) \hat{u}(\xi,t) d\xi\nonumber\\&-\int_0^x K^{vv}(x, \xi) \hat{v}(\xi,t) d\xi,\label{obs_backstepping_2} 
\end{align}
which are defined over the triangular domain $0 \leq \xi \leq x \leq \ell$, with the kernels $K^{uu},K^{uv},K^{vu},$ and $K^{vv}$ satisfying the following PDEs:
\begin{align}
\lambda_1\partial_xK^{uu}(x,\xi)+\lambda_1\partial_\xi K^{uu}(x,\xi)=&-c_2(\xi)K^{uv}(x,\xi)\label{k_uu},\\
\lambda_1\partial_xK^{uv}(x,\xi)-\lambda_2\partial_\xi K^{uv}(x,\xi)=&-c_1(\xi)K^{uu}(x,\xi)\label{k_uv},\\
\lambda_2\partial_xK^{vu}(x,\xi)-\lambda_1\partial_\xi K^{vu}(x,\xi)=&c_2(\xi)K^{vv}(x,\xi)\label{k_vu},\\
\lambda_2\partial_xK^{vv}(x,\xi)+\lambda_2\partial_\xi K^{vv}(x,\xi)=&c_1(\xi)K^{vu}(x,\xi)\label{k_vv},
\end{align}
and boundary conditions
\begin{align}
&K^{uu}(x,0)=\frac{\lambda_2}{q\lambda_1}K^{uv}(x,0),
&&K^{uv}(x,x)=\frac{c_1(x)}{\lambda_1+\lambda_2},\label{k_uu0__k_uvx}\\
&K^{vv}(x,0)=\frac{q\lambda_1}{\lambda_2}K^{vu}(x,0),
&&K^{vu}(x,x)=-\frac{c_2(x)}{\lambda_1+\lambda_2}.\label{k_vv0__k_vux}
\end{align}
Let us choose the control input $U(t)$ as
\begin{align}
    U(t)=\int_0^\ell N^u(\xi) \hat{u}(\xi,t) d\xi + \int_0^\ell N^v(\xi) \hat{v}(\xi,t) d\xi,\label{cntns_fb_uv}
\end{align}
where
\begin{align}
    N^u(\xi) =& K^{vu}(\ell,\xi) - \rho K^{uu} (\ell,\xi),\\
    N^v(\xi) =& K^{vv}(\ell,\xi) - \rho K^{uv} (\ell,\xi).
\end{align}
Then, the observer \eqref{obs_u}-\eqref{obs_BC2} gets transformed into the following target system:
\begin{align}
\partial_t \hat{\alpha}(x,t) =&-\lambda_1 \partial_x \hat{\alpha}(x,t) + \bar{p}_1(x)\tilde{\beta}(0,t),\label{obs_target_u} \\
\partial_t \hat{\beta}(x,t)=&\lambda_2 \partial_x \hat{\beta}(x,t) + \bar{p}_2(x)\tilde{\beta}(0,t),\label{obs_target_v}    
\end{align}
with boundary conditions
\begin{align}
\hat{\alpha}(0,t)=&q\hat{\beta}(0,t) + q\tilde{\beta}(0,t),\label{obs_target_BC1}\\
\hat{\beta}(\ell,t)=&\rho\hat{\alpha}(\ell,t), \label{obs_target_BC2}
\end{align}
where
\begin{align}
    \bar{p}_1(x)=&p_1(x)-\int_0^xK^{uu}(x,\xi)p_1(\xi)d\xi\nonumber\\&-\int_0^xK^{uv}(x,\xi)p_2(\xi)d\xi,\label{p1_bar}\\
    \bar{p}_2(x)=&p_2(x)-\int_0^xK^{vu}(x,\xi)p_1(\xi)d\xi\nonumber\\&-\int_0^xK^{vv}(x,\xi)p_2(\xi)d\xi.\label{p2_bar}
\end{align}


The inverse transformations of \eqref{obs_backstepping_1},\eqref{obs_backstepping_2} are given by
\begin{align}
    \hat{u}(x,t)  =&\hat{\alpha}(x,t)+\int_0^x L^{\alpha\alpha}(x, \xi) \hat{\alpha}(\xi,t) d\xi \nonumber\\&+ \int_0^x L^{\alpha\beta} \hat{\beta}(\xi,t) d\xi, \label{backstepping_1-inverse}\\
    \hat{v}(x,t)  =&\hat{\beta}(x,t)+\int_0^x L^{\beta\alpha}(x, \xi) \hat{\alpha}(\xi,t) d\xi \nonumber\\&+ \int_0^x L^{\beta\beta}(x,\xi) \hat{\beta}(\xi,t) d\xi, \label{backstepping_2-inverse}
\end{align}
defined over the triangular domain $0 \leq \xi \leq x \leq \ell$, where the kernels $L^{\alpha\alpha},L^{\alpha\beta},L^{\beta\alpha},L^{\beta\beta}$ satisfy the following PDEs:
\begin{align}
    \lambda_1 \partial_x L^{\alpha\alpha}(x,\xi) + \lambda_1 \partial_\xi L^{\alpha\alpha}(x,\xi)=&c_1(x)L^{\beta\alpha}(x,\xi),\label{l_alphaalpha}\\
    \lambda_1 \partial_x L^{\alpha\beta}(x,\xi) - \lambda_2 \partial_\xi L^{\alpha\beta}(x,\xi)=&c_1(x)L^{\beta\beta}(x,\xi),\label{l_alphabeta}\\
    \lambda_2 \partial_x L^{\beta\alpha}(x,\xi) - \lambda_1 \partial_\xi L^{\beta\alpha}(x,\xi)=&-c_2(x)L^{\alpha\alpha}(x,\xi),\label{l_betaalpha}\\
    \lambda_2 \partial_x L^{\beta\beta}(x,\xi) + \lambda_2 \partial_\xi L^{\beta\beta}(x,\xi)=&-c_2(x)L^{\alpha\beta}(x,\xi),\label{l_betabeta}
\end{align}
with boundary conditions
\begin{align}
    &L^{\alpha\alpha}(x,0)=\frac{\lambda_2}{q\lambda_1}L^{\alpha\beta}(x,0),
    &&L^{\alpha\beta}(x,x)=\frac{c_1(x)}{\lambda_1+\lambda_2},\label{l_alphaalpha0__l_alphabetax}\\
    &L^{\beta\beta}(x,0)=\frac{q\lambda_1}{\lambda_2}L^{\beta\alpha}(x,0),
    &&L^{\beta\alpha}(x,x)=-\frac{c_2(x)}{\lambda_1+\lambda_2}.\label{l_betabeta0__l_betaalphax}
\end{align}
The well-posedness of the systems \eqref{k_uu}-\eqref{k_vv0__k_vux} and \eqref{l_alphaalpha}-\eqref{l_betabeta0__l_betaalphax} is established in \cite{Vazquez2011}. 

It is worth noting that the control input $U(t)$ given by \eqref{cntns_fb_uv} can also be expressed as
\begin{equation}\label{cntns_fb}
    U(t)=\int_0^\ell N^\alpha(\xi) \hat{\alpha}(\xi,t)d\xi+\int_0^\ell N^\beta(\xi) \hat{\beta}(\xi,t)d\xi,
\end{equation}
where
\begin{align}
    N^\alpha(\xi)=&L^{\beta\alpha}(\ell,\xi)-\rho L^{\alpha\alpha}(\ell,\xi),\label{n_alpha}\\
    N^\beta(\xi)=&L^{\beta\beta}(\ell,\xi)-\rho L^{\alpha\beta}(\ell,\xi).\label{n_beta}
\end{align}

\begin{propo}
    Subject to Assumption \ref{assum:reflection}, the continuous-time plant and the observer \eqref{sys_u}-\eqref{sys_BC2},\eqref{obs_u}-\eqref{obs_BC2} with the continuous-time  control input \eqref{cntns_fb_uv} has a unique solution $(u,v,\hat 
    {u}, \hat{v})^T \in \mathcal{C}^0([t_k^\omega,t_{k+1}^\omega]; L^2((0,\ell);\mathbb{R}^4))$ and is globally exponentially stable in the spatial $L^2$ norm. 
\end{propo}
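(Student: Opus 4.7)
The plan is to work in the backstepping target coordinates, where the observer error system is mapped to \eqref{err_target_u}--\eqref{err_target_BC2} and the observer is mapped to \eqref{obs_target_u}--\eqref{obs_target_BC2}. In these coordinates the system has a cascade structure: the error subsystem is autonomous pure transport, and it couples into the observer target subsystem only through the trace $\tilde\beta(0,t)$ (via the distributed injections $\bar p_1,\bar p_2$ and the boundary term $\hat\alpha(0,t)=q\hat\beta(0,t)+q\tilde\beta(0,t)$). Since the kernels $P^{\alpha\alpha},P^{\alpha\beta},P^{\beta\alpha},P^{\beta\beta}$, $R^{uu},R^{uv},R^{vu},R^{vv}$, $K^{uu},K^{uv},K^{vu},K^{vv}$, $L^{\alpha\alpha},L^{\alpha\beta},L^{\beta\alpha},L^{\beta\beta}$ are bounded and continuous by the well-posedness results cited from \cite{Vazquez2011}, the transformations \eqref{err_backstepping1}--\eqref{err_backstepping2}, \eqref{obs_backstepping_1}--\eqref{obs_backstepping_2} and their inverses induce an $L^2$ norm equivalence between $(u,v,\hat u,\hat v)$ and $(\tilde\alpha,\tilde\beta,\hat\alpha,\hat\beta)$, so it will suffice to analyze the target variables.

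For well-posedness on $[t_k^\omega,t_{k+1}^\omega]$, I would first solve the error target system by the method of characteristics: $\tilde\alpha(0,t)=0$ together with rightward transport at speed $\lambda_1$ gives $\tilde\alpha\in\mathcal{C}^0([t_k^\omega,t_{k+1}^\omega];L^2)$, and then $\tilde\beta(\ell,t)=\rho\tilde\alpha(\ell,t)$ with leftward transport at speed $\lambda_2$ yields $\tilde\beta$ in the same class and a well-defined trace $\tilde\beta(0,t)$. The observer target system, being a linear inhomogeneous transport system with bounded reflections $\hat\beta(\ell,t)=\rho\hat\alpha(\ell,t)$ and the boundary input $\hat\alpha(0,t)=q\hat\beta(0,t)+q\tilde\beta(0,t)$, then admits a unique $\mathcal{C}^0([t_k^\omega,t_{k+1}^\omega];L^2)$ solution (by characteristics or by the strongly continuous semigroup generated on $L^2((0,\ell);\mathbb{R}^4)$). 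Inverting the backstepping maps delivers the same regularity for $(u,v,\hat u,\hat v)$.

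For exponential stability, I would use the weighted Lyapunov functional
\begin{align*}
V(t)=&A_1\!\int_0^\ell\! e^{-\mu x/\lambda_1}\hat\alpha^2\,dx+A_2\!\int_0^\ell\! e^{\mu x/\lambda_2}\hat\beta^2\,dx\\
&+B_1\!\int_0^\ell\! e^{-\mu x/\lambda_1}\tilde\alpha^2\,dx+B_2\!\int_0^\ell\! e^{\mu x/\lambda_2}\tilde\beta^2\,dx,
\end{align*}
with weights $\mu,A_1,A_2,B_1,B_2>0$ to be tuned. Integration by parts in $x$ produces: a volume dissipation $-\mu V$; boundary fluxes at $x=\ell$ equal to $(-\lambda_1A_1e^{-\mu\ell/\lambda_1}+\lambda_2A_2\rho^2e^{\mu\ell/\lambda_2})\hat\alpha^2(\ell,t)$ and an analogous $B_i$-weighted expression in $\tilde\alpha^2(\ell,t)$; boundary fluxes at $x=0$ giving $\lambda_1A_1\hat\alpha^2(0,t)-\lambda_2A_2\hat\beta^2(0,t)-\lambda_2B_2\tilde\beta^2(0,t)$; and two distributed cross-terms $2\tilde\beta(0,t)\int_0^\ell e^{-\mu x/\lambda_1}\bar p_1\hat\alpha\,dx$ and $2\tilde\beta(0,t)\int_0^\ell e^{\mu x/\lambda_2}\bar p_2\hat\beta\,dx$. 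Substituting $\hat\alpha(0,t)=q\hat\beta(0,t)+q\tilde\beta(0,t)$ and applying Young's inequality on the cross-terms replaces them by a small multiple of the volume norms of $\hat\alpha,\hat\beta$ plus additional multiples of $\tilde\beta^2(0,t)$. Choosing the ratios $A_1/A_2$ and $B_1/B_2$ inside an interval determined by $(\rho q)^2$ makes both $\ell$-boundary fluxes non-positive, which is feasible exactly because Assumption~\ref{assum:reflection} yields $2(\rho q)^2\le 1/2<1$ for sufficiently small $\mu$; taking $B_2$ large enough then absorbs every residual $\tilde\beta^2(0,t)$ contribution into $-\lambda_2B_2\tilde\beta^2(0,t)$. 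The result is $\dot V\le -\eta V$ for some $\eta>0$, and norm equivalence transfers the decay to $\|(u,v,\hat u,\hat v)\|$.

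The main obstacle will be the joint tuning of the five weights. Assumption~\ref{assum:reflection} constrains only the product $\rho q$ appearing in the $x=\ell$ reflection, while the $x=0$ boundary of the observer target system introduces an independent constraint through the factor $q$ in $\hat\alpha(0,t)$ together with the magnitudes of $\bar p_1,\bar p_2$ (which inherit all four kernels $K^{uu},K^{uv},K^{vu},K^{vv}$ and $P^{\alpha\beta},P^{\beta\beta}$). Demonstrating that a single $\mu>0$ can be chosen small enough so that the resulting system of inequalities on $(A_1,A_2,B_1,B_2)$ is simultaneously feasible is the step most susceptible to hidden sign errors, and it is where the bookkeeping will concentrate.
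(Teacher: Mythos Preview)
Your proposal is correct and follows essentially the same approach as the paper: the paper also passes to the target variables, uses the same exponentially weighted $L^2$ Lyapunov functional (written there as $W_0=V_0^1+C_0V_0^2$ with identical weights $A_0/\lambda_1,\,B_0/\lambda_2$ on both the error and observer parts, scaled by a small $C_0$ on the observer part), applies Young's inequality on the $\bar p_i$ cross-terms, and closes the boundary estimates via the constraint $2\rho^2q^2e^{\mu_0(\ell/\lambda_1+\ell/\lambda_2)}<1$, which is exactly the feasibility condition you arrive at under Assumption~\ref{assum:reflection}. Your choice to scale up $B_2$ is equivalent to the paper's choice to scale down $C_0$; the well-posedness argument in the paper is simply asserted as well-known, whereas you sketch characteristics, but otherwise the two proofs coincide.
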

\begin{proof}
    Since the closed-loop system \eqref{sys_u}-\eqref{sys_BC2},\eqref{obs_u}-\eqref{obs_BC2} is well-known to be well-posed under a continuous boundary input, there exists a unique solution $(u,v,\hat 
    {u}, \hat{v})^T \in \mathcal{C}^0([t_k^\omega,t_{k+1}^\omega]; L^2((0,\ell);\mathbb{R}^4))$. To establish the stability of system \eqref{sys_u}-\eqref{sys_BC2},\eqref{obs_u}-\eqref{obs_BC2},\eqref{cntns_fb_uv}, we first prove the $L^2$ exponential stability of the target systems \eqref{err_target_u}-\eqref{err_target_BC2}, \eqref{obs_target_u}-\eqref{obs_target_BC2}, which induce the $L^2$ exponential stability of the original closed-loop system. Consider the following Lyapunov function:
\begin{align}
    W_0(t)=V_0^1(t)+C_0V_0^2(t),\label{w_0}
\end{align}
where
\begin{align*}
    V_0^1(t)=&\int_0^\ell \left(\frac{A_0}{\lambda_1}\tilde{\alpha}^2(x,t)e^{-\mu_0\frac{x}{\lambda_1}}
    + \frac{B_0}{\lambda_2}\tilde{\beta}^2(x,t)e^{\mu_0\frac{x}{\lambda_2}}\right)dx,\\
    V_0^2(t)=&\int_0^\ell \left(\frac{A_0}{\lambda_1}\hat{\alpha}^2(x,t)e^{-\mu_0\frac{x}{\lambda_1}}
    + \frac{B_0}{\lambda_2}\hat{\beta}^2(x,t)e^{\mu_0\frac{x}{\lambda_2}}\right)dx.\\
\end{align*}
Let us select the constants $\mu_0$, $A_0$, $B_0$, and $C_0$ as
\begin{align*}
    \mu_0\in&\left(0,\frac{2\lambda_1\lambda_2}{\ell(\lambda_1+\lambda_2)}\ln{\left(\frac{1}{\sqrt{2}|\rho q|}\right)}\right),\\
    A_0=&\rho^2e^{\mu_0\left(\frac{\ell}{\lambda_1}+\frac{\ell}{\lambda_2}\right)}+\frac{e^{\mu_0\frac{\ell}{\lambda_1}}}{2},\\
    B_0=&q^2e^{\mu_0\frac{\ell}{\lambda_1}}+1,\\
    C_0=&\left[\frac{2r_0}{\mu_0}\int_0^\ell\left(\frac{A_0}{\lambda_1}e^{-\mu_0\frac{x}{\lambda_1}}\bar{p}_1^2(x) + \frac{B_0}{\lambda_2}e^{\mu_0\frac{x}{\lambda_2}}\bar{p}_2^2(x)\right)\right]^{-1},\\
    \delta_0=&\frac{2r_0}{\mu_0}\left(1-2\rho^2q^2e^{\mu_0\left(\frac{\ell}{\lambda_1}+\frac{\ell}{\lambda_2}\right)}\right),
\end{align*}
where $r_0$ is selected large enough such that $C_0<1$ and $1-\frac{2}{\mu_0\delta_0}>0$. Then, under Assumption~\ref{assum:reflection}, using \eqref{err_target_u}-\eqref{err_target_BC2}, \eqref{obs_target_u}-\eqref{obs_target_BC2}, and using Young's inequality, we obtain the following relation for $\dot{W}_0(t)$:
\begin{align*}
    \dot{W}_0(t)\leq-\frac{\mu_0}{2}W_0(t).
\end{align*}
 Hence, we see that the target systems \eqref{err_target_u}-\eqref{err_target_BC2} and \eqref{obs_target_u}-\eqref{obs_target_BC2} are globally exponentially stable in the spatial $L^2$ norm. Using the boundedness of the backstepping transformations \eqref{err_backstepping1},\eqref{err_backstepping2}, \eqref{err_backstepping1-inverse},\eqref{err_backstepping2-inverse},\eqref{obs_backstepping_1},\eqref{obs_backstepping_2},\eqref{backstepping_1-inverse}, and \eqref{backstepping_2-inverse}, we can conclude that the closed-loop system \eqref{sys_u}-\eqref{sys_BC2},\eqref{obs_u}-\eqref{obs_BC2} along with the continuous-time control input $U(t)$ given by \eqref{cntns_fb_uv}, is globally exponentially stable in the spatial $L^2$ norm. \hfill 
\end{proof}

\begin{figure}[t]
    \centering
    \begin{tikzpicture}

\def\len{0.6}
\def\height{1.8}
\def\numconnections{5}

\pgfmathsetmacro{\width}{\len*\columnwidth}
\pgfmathsetmacro{\connectdist}{\width/(\numconnections+1)}
\pgfmathsetmacro{\vertdist}{\height*\baselineskip}

\node (u0) [label={[xshift=10 pt, yshift=-1 pt]$u(x,t)$}]{};
\node (u1) [right=\width pt of u0]{};
\node (v0) [below = \vertdist pt of u0, label={[xshift=10 pt, yshift=-1 pt]$v(x,t)$}]{};
\node (v1) [right=\width pt of v0]{};

\foreach \i in {1,...,\numconnections} {
	\node (nodetop\i) [right=\i*\connectdist pt of u0]{};
	\node (nodebottom\i) [right=\i*\connectdist pt of v0]{};
}		

\node (x0) [below =5pt of v0, label={[xshift=0 pt, yshift=-20 pt]$x=0$}] {};
\node (x1) [below =5pt of v1, label={[xshift=0 pt, yshift=-20 pt]$x=\ell$}] {};
\node (x1a) [right =0.5 pt of x1] {};

\node (y0) [left = 2 pt of v0] {};
\node (y1) [left = 10 pt of y0] {};

\node (Uk0) [right =2 pt of v1] {};
\node (Uk1) [right = 5 pt of Uk0] {};
\node (Uk2) [below =1.5* \vertdist pt of Uk1] {};

\node (c0) [below =1.5* \vertdist pt of y1] {};
\node (c1) [right = 0.5*\width pt of c0] {};
\node (e0) [below= 1.5*\vertdist pt of c0] {};
\node (e1) [right = 0.5*\width pt of e0] {};

\node (obs) [rectangle, draw = blue, text = blue, minimum width = 2cm, minimum height = 0.6cm] at (c1) {Observer + Controller};
\node (trig) [rectangle, draw = red, text = red, minimum width = 2cm, minimum height = 0.6cm] at (e1) {Triggering mechanism};

\node(c2)[right = 0.1*\width pt of obs.east]{};

	\node(sw) [below right = 0.7071*0.05*\width pt and  0.7071*0.05*\width pt of c2] {};

\node(e2)[right = 0.125*\width pt of trig.east]{};
\node(e3)[above = 1.4*\vertdist pt of e2.center]{};

\node(Uk3) [right=0.05*\width pt of c2]{};
\draw [-stealth, line width=0.8pt] (x0.west) to (x1a.east);
\draw [line width=0.5pt] (x0.north) -- (x0.south);
\draw [line width=0.5pt] (x1.north) -- (x1.south);
\foreach \i in {1,...,\numconnections} {
	\draw [stealth-stealth,dashed, line width=1pt] (nodetop\i.south) to (nodebottom\i.north);
}

\draw [-stealth, line width=2pt] (u0.center) to (u1.center);
\draw [stealth-, line width=2pt] (v0.center) to (v1.center);

\draw [-stealth, line width=1 pt, bend left=45] (u1.east) to node[pos=0.5, right]{$\rho$} (v1.east);
\draw [-stealth, line width=1 pt, bend left=45] (v0.west) to node[pos=0.5, left]{$q$} (u0.west);

\draw [blue,-,line width=1pt] (y0.center) -- node[pos=1.2, above]{$v(0,t)$} (y1.center);
\draw [blue,-,line width=1pt] (y1.center) --  (c0.center);
\draw [blue,-stealth,line width=1pt] (c0.center) -- (obs.west);

\draw [blue, line width=1pt,decoration={markings,mark=at position 0.6 with {\arrow{stealth}}},  postaction={decorate}] (obs.east)  -- (c2.center); 
\draw[red,-stealth,line width=1pt] ([xshift=10pt, yshift=0pt]obs.south) -- node[pos=0.5, left]{$\hat{\alpha}(x,t),\hat{\beta}(x,t)$}([xshift=10pt, yshift=0pt]trig.north);
\draw [red,-stealth, line width=1pt] (c0.center)  |- (trig.west); 

\draw [blue,-,line width=2pt] (c2.center) -- (sw.center);

\draw [fill=blue,draw=blue](c2.center) circle [radius=1pt]; 

\draw [red,-,dashed, line width=1pt](trig.east) -- (e2.center);
 \draw [red,-stealth,dashed, line width=1pt] (e2.center)  -- node[pos=0.2, right]{$\{t_k^\omega\}_{k\in\mathbb{N}}$} (e3.center);

\draw [red,-, line width=1pt] (Uk3.center) -- node[pos=0.5, below]{$U_k^\omega(t)$} (Uk2.center);
\draw [red,-, line width=1pt] (Uk2.center) -- (Uk1.center);
\draw [red,-stealth, line width=1pt] (Uk1.center) -- (Uk0.center);

\draw [fill=red,draw=red](Uk3.center) circle [radius=1pt]; 
\end{tikzpicture}
    \caption{Observer-based event-triggered closed-loop system. The increasing sequences of event times $\{t_k^c\}_{k\in\mathbb{N}}$, $\{t_k^p\}_{k\in\mathbb{N}}$, and $\{t_k^p\}_{k\in\mathbb{N}}$ with $\omega=\{``c",``p",``s"\}$ are generated by CETC, PETC, and STC triggering mechanisms.}
    \label{fig:etc}
\end{figure}
 From \eqref{cntns_fb_uv}, we derive by emulation the following aperiodic sampled-data control signal held constant between events:
\begin{align}
    U_k^\omega(t):=U(t_k^\omega),\label{U_d}
\end{align}
for all $t\in[t_k^\omega,t_{k+1}^\omega), k\in\mathbb{N}$, $\omega=\{``c",``p",``s"\}$. Entities related to CETC, PETC, and STC are labeled by the superscripts $``c"$, $``p"$, and $``s"$ respectively. In subsequent sections, we present event-triggering rules to determine increasing sequences of event times, $I^\omega=\{t_k^\omega\}_{k\in\mathbb{N}}$. The event-triggered closed-loop system is depicted in Fig.~\ref{fig:etc}. The deviation between the sampled-data control \eqref{U_d} and the continuous control \eqref{cntns_fb_uv}, referred to as the input holding error, is defined as
\begin{align}
    d(t):=&U_k^\omega(t)-U(t),\label{d}
\end{align}
for $t\in[t_k^\omega,t_{k+1}^\omega), k\in\mathbb{N}$.
Under the control input $U_k^\omega(t)$, the boundary values \eqref{sys_BC2}, \eqref{obs_BC2}, and \eqref{obs_target_BC2} change to
\begin{align}
    v(\ell,t)=&\rho u(\ell,t) + U_k^\omega(t),\label{sys_BC2_d}\\
    \hat{v}(\ell,t)=&\rho \hat{u}(\ell,t) + U_k^\omega(t),\label{obs_BC2_d}\\
    \hat{\beta}(\ell,t)=&\rho\hat{\alpha}(\ell,t) + d(t).\label{obs_target_BC2_d}
\end{align}
In the following proposition, we establish the well-posedness of the systems \eqref{sys_u}-\eqref{sys_BC1},\eqref{sys_BC2_d} and \eqref{obs_u}-\eqref{obs_BC1},\eqref{obs_BC2_d} between consecutive events.
\begin{propo}\label{prop:well-posed}
    Let $k\in\mathbb{N}$, and $U_{k}^{\omega}(t)\in\mathbb{R}$ be constant between two event times $t_k^\omega$ and $t_{k+1}^\omega$. For a given $(u(\cdot,t_k^\omega), v(\cdot,t_k^\omega))^T \in L^2((0,\ell);\mathbb{R}^2)$ and $(\hat u(\cdot,t_k^\omega), \hat v(\cdot,t_k^\omega))^T \in L^2((0,\ell);\mathbb{R}^2)$, there exist unique solutions such that $(u, v)^T \in \mathcal{C}^0([t_k^\omega, t_{k+1}^\omega];  L^2((0,\ell);\mathbb{R}^2))$ and $(\hat 
    {u}, \hat{v})^T \in \mathcal{C}^0([t_k^\omega,t_{k+1}^\omega]; L^2((0,\ell);\mathbb{R}^2))$ to the systems \eqref{sys_u}-\eqref{sys_BC1},\eqref{sys_BC2_d} and \eqref{obs_u}-\eqref{obs_BC1},\eqref{obs_BC2_d} respectively between two time instants $t_k^\omega$ and $t_{k+1}^\omega$.  
\end{propo}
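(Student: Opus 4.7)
The plan is to reduce the claim to the standard well-posedness theory for $2\times 2$ linear hyperbolic systems already invoked in the proof of Proposition~1, exploiting the fact that between two events the input $U_k^\omega$ is merely a constant. I would shift time so that $t_k^\omega=0$ and write $T:=t_{k+1}^\omega-t_k^\omega$. With $U_k^\omega$ constant, the plant \eqref{sys_u}-\eqref{sys_BC1},\eqref{sys_BC2_d} is a $2\times 2$ linear hyperbolic system with bounded in-domain coefficients, reflective coupling at $x=0$, and an affine boundary relation at $x=\ell$ driven by a constant forcing; the observer \eqref{obs_u}-\eqref{obs_BC1},\eqref{obs_BC2_d} has the same structure except that it is additionally driven by (i) the Dirichlet data $q v(0,t)$ at $x=0$ and (ii) bounded distributed forcing $p_1(x)\tilde v(0,t)$, $p_2(x)\tilde v(0,t)$, both depending on the plant trace $v(0,\cdot)$.

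The proof proceeds in two stages. First, I would establish well-posedness of the plant on $[0,T]$ by the method of characteristics combined with a Banach fixed-point argument: since $\lambda_1,\lambda_2>0$, integrating along the rightward characteristic for $u$ and the leftward one for $v$, and closing the unknown traces with the boundary relations $u(0,\cdot)=q v(0,\cdot)$ and $v(\ell,\cdot)=\rho u(\ell,\cdot)+U_k^\omega$, yields coupled Volterra integral equations
\[
u=\mathcal{F}_u[u^0,v^0,v;U_k^\omega],\qquad v=\mathcal{F}_v[u^0,v^0,u;U_k^\omega],
\]
which define a contraction on $\mathcal{C}^0([0,\tau];L^2((0,\ell);\mathbb{R}^2))$ for sufficiently small $\tau>0$; linearity then extends the local solution to all of $[0,T]$ by iteration. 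Second, with the plant trace $v(0,\cdot)\in L^2(0,T)$ recovered via hidden regularity for first-order hyperbolic systems (see e.g.\ Bastin--Coron~2016, Appendix~A), the observer inherits the same structural form with an $L^2$-in-time boundary input at $x=0$ and bounded in-domain forcing---the output-injection gains $p_1,p_2$ come from the bounded kernels $P^{\alpha\beta}(\cdot,0),P^{\beta\beta}(\cdot,0)$, whose well-posedness is asserted after \eqref{r_vv1__r_vux}---so the same characteristic-plus-contraction scheme, now closing $\hat v(0,\cdot)$ through the same hidden-regularity estimate applied to the observer, delivers a unique $\mathcal{C}^0([0,T];L^2((0,\ell);\mathbb{R}^2))$ observer solution.

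The main obstacle is the rigorous interpretation of the boundary traces $v(0,\cdot)$ and $\hat v(0,\cdot)$, since an $L^2$-in-space function has no pointwise trace a priori; this is resolved by the hidden regularity of boundary values of weak $L^2$-solutions to $2\times 2$ hyperbolic systems, which gives $v(0,\cdot)\in L^2(0,T)$ together with a continuous-dependence estimate against the initial and boundary data. Once this trace regularity is under control, the remaining arguments---the short-time contraction estimates and their linear extension to the full inter-event interval---are routine and parallel the reasoning already used in Proposition~1, the only simplification being that $U_k^\omega$ is a constant rather than a state-dependent feedback.
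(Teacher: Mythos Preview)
The paper does not give a proof of this proposition at all: it simply states that ``the proof of Proposition~\ref{prop:well-posed} is similar to \cite[Proposition~1]{espitiaObserverbased2020}'' and moves on. Your outline is therefore far more detailed than anything in the paper, and the argument you sketch---characteristics plus a short-time contraction on $\mathcal{C}^0([0,\tau];L^2)$, linear continuation to the full inter-event interval, and hidden regularity to make sense of the trace $v(0,\cdot)$ feeding the observer---is the standard route for $L^2$ well-posedness of $2\times 2$ linear hyperbolic systems with bounded in-domain coefficients and affine boundary data, and is essentially what the cited reference does. The two-stage structure (plant first, then observer driven by the plant trace) and the appeal to boundedness of the kernels $P^{\alpha\beta}(\cdot,0),P^{\beta\beta}(\cdot,0)$ are the right ingredients. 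In short: your proposal is correct and supplies the argument the paper omits; there is no meaningful methodological difference to compare, since the paper offers only a citation.
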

The proof of Proposition~\ref{prop:well-posed} is similar to \cite[Proposition 1]{espitiaObserverbased2020}. 

We now proceed to develop the following result on the existence and uniqueness of solutions of the systems \eqref{sys_u}-\eqref{sys_BC1},\eqref{sys_BC2_d} and \eqref{obs_u}-\eqref{obs_BC1},\eqref{obs_BC2_d} with the input \eqref{U_d} for all $t\in\mathbb{R}^+$.
\begin{corollary}\label{cor:existece}
     Let $k\in\mathbb{N}$, and $U_{k}^{\omega}(t)\in\mathbb{R}$ be constant between two event times $t_k^\omega$ and $t_{k+1}^\omega$ in the sequence of event times $I^\omega=\{t_k^\omega\}_{k\in\mathbb{N}}$, and assume that for all such intervals, there exists a lower bound $\tau_0>0$ such that $t_{k+1}^\omega-t_k^\omega\geq\tau_0$. Then, for every initial condition $(u^0,v^0)^T\in L^2((0,\ell);\mathbb{R}^2)$ and $(\hat{u}^0,\hat{v}^0)^T\in L^2((0,\ell);\mathbb{R}^2)$, there exist unique solutions $(u,v)^T\in\mathcal{C}^0(\mathbb{R}^+;L^2((0,\ell);\mathbb{R}^2))$ and $(\hat{u},\hat{v})^T\in\mathcal{C}^0(\mathbb{R}^+;L^2((0,\ell);\mathbb{R}^2))$ to the systems \eqref{sys_u}-\eqref{sys_BC1},\eqref{sys_BC2_d} and \eqref{obs_u}-\eqref{obs_BC1},\eqref{obs_BC2_d} respectively.
\end{corollary}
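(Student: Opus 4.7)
The plan is to iterate Proposition~\ref{prop:well-posed} across consecutive event intervals and paste the pieces together using the Zeno-free guarantee $t_{k+1}^\omega - t_k^\omega \geq \tau_0 > 0$. First, I would observe that this dwell-time bound immediately implies $t_k^\omega \to \infty$ as $k\to\infty$, so that $\mathbb{R}^+ = \bigcup_{k\in\mathbb{N}} [t_k^\omega, t_{k+1}^\omega]$ with $t_0^\omega = 0$, and there is no finite accumulation point at which a blow-up or loss of well-posedness could occur.

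Next, I would proceed by induction on $k$. The base case $k=0$ is exactly Proposition~\ref{prop:well-posed} applied to the prescribed initial data $(u^0,v^0)^T$ and $(\hat u^0, \hat v^0)^T$ in $L^2((0,\ell);\mathbb{R}^2)$, which yields unique solutions $(u,v)^T$ and $(\hat u, \hat v)^T$ in $\mathcal{C}^0([t_0^\omega,t_1^\omega];L^2((0,\ell);\mathbb{R}^2))$ on the first inter-event interval, subject to the constant input $U_0^\omega$. For the inductive step, assume solutions have been constructed on $[0, t_k^\omega]$; by continuity in time with values in $L^2$, the traces $(u(\cdot,t_k^\omega), v(\cdot,t_k^\omega))^T$ and $(\hat u(\cdot, t_k^\omega), \hat v(\cdot,t_k^\omega))^T$ are well-defined elements of $L^2((0,\ell);\mathbb{R}^2)$. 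Using them as initial data at time $t_k^\omega$ and using that $U_k^\omega$ is constant on $[t_k^\omega, t_{k+1}^\omega]$, Proposition~\ref{prop:well-posed} again delivers unique $\mathcal{C}^0$-in-time $L^2$-valued solutions on $[t_k^\omega,t_{k+1}^\omega]$, which can be concatenated to the previously constructed trajectory. By construction, the concatenation matches the old and new pieces at $t_k^\omega$, ensuring continuity at the junction in the $L^2$ norm even though the input itself may jump there.

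Finally, I would pass to the limit: because the dwell-time bound forces $t_k^\omega \to \infty$, the above inductive construction eventually covers any finite horizon $[0,T]$ after at most $\lceil T/\tau_0\rceil$ steps. Uniqueness of the global trajectory in $\mathcal{C}^0(\mathbb{R}^+; L^2((0,\ell);\mathbb{R}^2))$ follows from the uniqueness on each interval supplied by Proposition~\ref{prop:well-posed}: any two global solutions must agree piecewise, hence agree everywhere.

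The only subtle point, and the one I expect to have to justify carefully, is that the stitched function lies in $\mathcal{C}^0(\mathbb{R}^+; L^2)$ rather than merely being piecewise continuous. This reduces to checking that the one-sided $L^2$-limits at each event time match, which is automatic because the terminal $L^2$-value on $[t_{k-1}^\omega, t_k^\omega]$ is precisely what we feed as initial data into Proposition~\ref{prop:well-posed} on $[t_k^\omega, t_{k+1}^\omega]$. The absence of a finite accumulation point of $\{t_k^\omega\}$ (guaranteed by $\tau_0>0$) is what prevents this pasting from breaking down at a Zeno point, so the conclusion holds globally on $\mathbb{R}^+$.
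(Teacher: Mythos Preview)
Your proposal is correct and follows essentially the same approach as the paper: use the dwell-time bound $t_{k+1}^\omega - t_k^\omega \geq \tau_0 > 0$ to conclude $t_k^\omega \to \infty$, and then iteratively apply Proposition~\ref{prop:well-posed} on each interval to construct the global solution. In fact, your write-up is more detailed than the paper's, which dispatches the argument in two sentences and leaves the continuity-at-junctions and uniqueness observations implicit.
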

\begin{proof}
    Note that this proof follows the proof of \cite[Proposition 1]{espitiaObserverbased2020}. Since we assume that for $k\in\mathbb{N}$, $t_{k+1}^\omega-t_k^\omega\geq\tau_0>0$, for all events in $I^\omega\{t_k^\omega\}_{k\in\mathbb{N}}$ we know that as $k\rightarrow\infty$, $t_k^\omega\rightarrow\infty$. Hence a solution can be constructed by iteratively applying Proposition~\ref{prop:well-posed} for all intervals in $I^\omega$ for $t\in\mathbb{R}^+$. \hfill 
\end{proof}

Considering an interval $t\in(t_k^\omega,t_{k+1}^\omega), k\in\mathbb{N}$ and using Proposition~\ref{prop:well-posed},  the following lemma holds for $d(t)$ given by \eqref{d}.
\begin{lemm}\label{lem:ddot2}
    For $d(t)$ given by \eqref{d}, the following inequality holds for all $t\in(t_k^\omega,t_{k+1}^\omega), k\in\mathbb{N}$:
    \begin{align}
         \nonumber
        (\dot{d}(t))^2\leq&\epsilon_0\|(\hat{\alpha}(\cdot,t),\hat{\beta}(\cdot,t))^T\|^2 + \epsilon_1 \hat{\alpha}^2(\ell,t)\nonumber\\ &+ \epsilon_2\tilde{\beta}^2(0,t) + \epsilon_3 d^2(t),\label{ddot2}
    \end{align}
     where $\epsilon_0,\epsilon_1,\epsilon_2,$ and $\epsilon_3\geq0$ are given by
    \begin{align}
        \epsilon_0=&5\max\bigg\{\lambda_1^2\int_0^\ell(\partial_\xi N^\alpha(\xi))^2d\xi,\nonumber\\
&\mkern150mu\lambda_2^2\int_0^\ell(\partial_\xi N^\beta(\xi))^2d\xi\bigg\},\label{epsilon0}\\
\epsilon_1=&5(\lambda_1N^\alpha(\ell)-\rho\lambda_2N^\beta(\ell))^2,\,\label{epsilon1}\\
\epsilon_2=&5\Bigg(\int_0^\ell\left(N^\alpha(\xi)\bar{p}_1(\xi)+N^\beta(\xi)\bar{p}_2(\xi)\right)d\xi\nonumber\\
        & \mkern150mu+q\lambda_1N^\alpha(0)\Bigg)^2,\label{epsilon2}\\
        \epsilon_3=&5(\lambda_2N^\beta(\ell))^2.\label{epsilon3}
    \end{align}
\end{lemm}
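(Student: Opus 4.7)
Since $U_k^\omega(t)$ is held constant on $(t_k^\omega, t_{k+1}^\omega)$, definition \eqref{d} gives $\dot d(t) = -\dot U(t)$, so it suffices to bound $(\dot U(t))^2$. I would start from the representation \eqref{cntns_fb} of $U(t)$ in the target coordinates and differentiate under the integral sign, substituting the target dynamics \eqref{obs_target_u}-\eqref{obs_target_v}:
\begin{align*}
\dot U(t) =& \int_0^\ell N^\alpha(\xi)\bigl[-\lambda_1\partial_\xi\hat\alpha(\xi,t)+\bar p_1(\xi)\tilde\beta(0,t)\bigr]d\xi\\
&+\int_0^\ell N^\beta(\xi)\bigl[\lambda_2\partial_\xi\hat\beta(\xi,t)+\bar p_2(\xi)\tilde\beta(0,t)\bigr]d\xi.
\end{align*}

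Next I would integrate the two transport terms by parts, pushing the spatial derivative onto $N^\alpha,N^\beta$, and then inject the boundary conditions \eqref{obs_target_BC1} and \eqref{obs_target_BC2_d}, i.e.\ $\hat\alpha(0,t)=q\hat\beta(0,t)+q\tilde\beta(0,t)$ and $\hat\beta(\ell,t)=\rho\hat\alpha(\ell,t)+d(t)$. This produces five groups of terms: (i) a volume integral against $\partial_\xi N^\alpha\,\hat\alpha$, (ii) a volume integral against $\partial_\xi N^\beta\,\hat\beta$, (iii) a boundary term at $x=\ell$ proportional to $[\rho\lambda_2 N^\beta(\ell)-\lambda_1 N^\alpha(\ell)]\hat\alpha(\ell,t)$, (iv) a $d(t)$-term with coefficient $\lambda_2 N^\beta(\ell)$, (v) a $\tilde\beta(0,t)$-term with coefficient $q\lambda_1 N^\alpha(0)+\int_0^\ell(N^\alpha\bar p_1+N^\beta\bar p_2)d\xi$, plus a stray $\hat\beta(0,t)$-term with coefficient $q\lambda_1 N^\alpha(0)-\lambda_2 N^\beta(0)$.

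The main obstacle is dispatching that stray $\hat\beta(0,t)$ term, since the lemma statement does not accommodate it. I would resolve this by using the kernel boundary relations \eqref{l_alphaalpha0__l_alphabetax}, \eqref{l_betabeta0__l_betaalphax} in the definitions \eqref{n_alpha}, \eqref{n_beta} of $N^\alpha, N^\beta$: substituting $L^{\alpha\alpha}(\ell,0)=\tfrac{\lambda_2}{q\lambda_1}L^{\alpha\beta}(\ell,0)$ and $L^{\beta\beta}(\ell,0)=\tfrac{q\lambda_1}{\lambda_2}L^{\beta\alpha}(\ell,0)$ yields $q\lambda_1 N^\alpha(0)=\lambda_2 N^\beta(0)=q\lambda_1 L^{\beta\alpha}(\ell,0)-\rho\lambda_2 L^{\alpha\beta}(\ell,0)$, so the $\hat\beta(0,t)$-coefficient vanishes identically. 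After this cancellation, $\dot U(t)$ is expressed as a sum of exactly five terms whose coefficients match those appearing in \eqref{epsilon0}-\eqref{epsilon3}.

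Finally I would apply the elementary inequality $(\sum_{i=1}^5 a_i)^2\leq 5\sum_{i=1}^5 a_i^2$ to $(\dot U(t))^2=(\dot d(t))^2$, and bound the two volume-integral squares by Cauchy--Schwarz:
\[
\Bigl(\int_0^\ell\!\partial_\xi N^\alpha(\xi)\hat\alpha(\xi,t)d\xi\Bigr)^2\leq \int_0^\ell(\partial_\xi N^\alpha)^2 d\xi\cdot\|\hat\alpha(\cdot,t)\|^2_{L^2},
\]
and analogously for the $\hat\beta$-integral. Combining these two terms by majorizing with the maximum of the coefficients yields the $\epsilon_0\|(\hat\alpha,\hat\beta)^T\|^2$ contribution, while the remaining three terms directly produce $\epsilon_1\hat\alpha^2(\ell,t)+\epsilon_2\tilde\beta^2(0,t)+\epsilon_3 d^2(t)$, completing the bound \eqref{ddot2}.
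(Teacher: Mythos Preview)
Your proposal is correct and follows essentially the same route as the paper: differentiate $d(t)=-U(t)$ via \eqref{cntns_fb}, substitute the target dynamics, integrate by parts with the boundary conditions \eqref{obs_target_BC1},\eqref{obs_target_BC2_d}, exploit the cancellation $\lambda_2 N^\beta(0)-q\lambda_1 N^\alpha(0)=0$, then apply $(\sum_{i=1}^5 a_i)^2\le 5\sum a_i^2$ and Cauchy--Schwarz. Your explicit derivation of the cancellation from the kernel boundary relations \eqref{l_alphaalpha0__l_alphabetax},\eqref{l_betabeta0__l_betaalphax} is a nice addition that the paper simply asserts as a ``fact.''
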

\begin{proof}
    Note that this proof follows the proof of \cite[Lemma 2]{espitiaObserverbased2020}. Taking the time derivative of $d(t)$ for $t\in(t_k^\omega,t_{k+1}^\omega), k\in\mathbb{N}$, and using \eqref{obs_target_u},\eqref{obs_target_v}, we obtain the following relation:
    \begin{align}
        \dot{d}(t)=&\lambda_1\int_0^\ell N^\alpha(\xi)\partial_\xi\hat{\alpha}(\xi,t)d\xi-\lambda_2\int_0^\ell N^\beta(\xi)\partial_\xi\hat{\beta}(\xi,t)d\xi\nonumber\\
        &-\tilde{\beta}(0,t)\int_0^\ell \left(N^\alpha(\xi)\bar{p}_1(\xi)d\xi + N^\beta(\xi)\bar{p}_2(\xi)\right)d\xi.
    \end{align}
    Integrating by parts with the help of  \eqref{obs_target_BC1},\eqref{obs_target_BC2}, using Young's  inequality,  and the fact that $\lambda_2N^\beta(0)-q\lambda_1N^\alpha(0)=0$, one gets
  
    \begin{align}\label{d-estim}
        &(\dot{d}(t))^2\leq 5\bigg\{(\lambda_1N^\alpha(\ell)-\rho\lambda_2N^\beta(\ell))^2\hat{\alpha}^2(\ell,t)\nonumber\\
        &+(\lambda_2N^\beta(\ell))^2d^2(t)
        +\lambda_1^2\left(\int_0^\ell\partial_\xi N^\alpha(\xi)\hat{\alpha}(\xi,t)d\xi\right)^2\nonumber\\
        &+ \lambda_2^2\left(\int_0^\ell\partial_\xi N^\beta(\xi)\hat{\beta}(\xi,t)d\xi\right)^2\nonumber\\
        &+\tilde{\beta}(0,t)^2 \bigg(\int_0^\ell N^\alpha(\xi)\bar{p}_1(\xi)d\xi+\int_0^\ell N^\beta(\xi)\bar{p}_2(\xi)d\xi\nonumber\\
        &+q\lambda_1N^\alpha(0)\bigg)^2\bigg\}.
    \end{align}
    Hence, using the Cauchy–Schwarz inequality, we can obtain from \eqref{d-estim} the estimate \eqref{ddot2}, where $\epsilon_0,\epsilon_1,\epsilon_2,\epsilon_3\geq0$ are given by \eqref{epsilon0}-\eqref{epsilon3}. This concludes the proof. \hfill 
\end{proof} 
\subsection{Continuous-time  event-triggered control (CETC)}\label{subsec:cetc}
This section details the CETC triggering mechanism that determines the increasing sequence of event times $I^c=\{t_k^c\}_{k\in\mathbb{N}}$ at which the control input $U_k^c(t)$ is updated by continuously evaluating a triggering function $\Gamma^c(t)$. The sequence  $I^c$ is determined via the following rule.
\begin{definition}\label{defn:triggering}
The increasing sequence of event times $I^c=\{t_k^c\}_{k\in\mathbb{N}}$ with $t_0^c=0$ are determined via the following rule:
\begin{align}
t_{k+1}^c:=&\inf\{t\in\mathbb{R}|t>t_k^c, \Gamma^c(t)>0,k\in\mathbb{N}\},\label{tck}\\
\Gamma^c(t):=&\theta d^2(t) + m(t),\label{gamma_c}
\end{align}
where $d(t)$ is given by \eqref{d}. The dynamic variable $m(t)$ evolves according to the ODE
\begin{align}
    \dot{m}(t)=&-\eta m(t) +\theta_m d^2(t)- \kappa_0 \|(\hat{\alpha}(\cdot,t),\hat{\beta}(\cdot,t))^T\|^2 \nonumber\\& -\kappa_1\hat{\alpha}^2(\ell,t)-\kappa_2\tilde{\beta}^2(0,t),\label{m}
\end{align}
for $t\in(t_k^c,t_{k+1}^c),k\in\mathbb{N}$ with $m(0)=m^0<0,~m(t_k^{c^-})=m(t_k^c)=m(t_k^{c^+})~k\in\mathbb{N}$. Let $\eta,\theta>0$ be arbitrary parameters and $\kappa_0,\kappa_1,\kappa_2>0$, and $\theta_m>0$ be event-trigger parameters to be determined.
\end{definition}

It should be noted that the CETC design follows the design procedure in \cite{espitiaObserverbased2020} with the only difference being that the control input is anti-collocated with the measurement.

The event-triggering rule guarantees that $\Gamma^c(t)\leq 0$ for all $t\in[0,F)$, where $F=\sup\{I^c\}$, hence the following lemma holds.
\begin{lemm}\label{lem:m<0} Under the CETC approach \eqref{U_d},\eqref{tck}-\eqref{m}, the dynamic variable $m(t)$ with $m^0<0$ satisfies $m(t)<0$ for all $t\in[0,F)$, where $F=\sup\{I^c\}$. 
\end{lemm}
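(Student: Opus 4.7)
My plan is a short two-step argument that combines the triggering rule with an integrating-factor estimate on the ODE \eqref{m}, analogous in spirit to a standard dynamic ETC argument.

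First, I would exploit the infimum definition of $t_{k+1}^c$ in \eqref{tck}: by construction and the continuity of $\Gamma^c$ between events, one has $\Gamma^c(t)\leq 0$ on each half-open interval $[t_k^c,t_{k+1}^c)$, and therefore on all of $[0,F)$. Unpacking \eqref{gamma_c} yields the key inequality
$$d^2(t)\leq -\frac{m(t)}{\theta},\qquad t\in[0,F).$$

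Second, I would insert this bound into the ODE \eqref{m}. Since $\kappa_0,\kappa_1,\kappa_2\geq 0$, the three quadratic terms $-\kappa_0\|(\hat\alpha(\cdot,t),\hat\beta(\cdot,t))^T\|^2 - \kappa_1\hat\alpha^2(\ell,t) - \kappa_2\tilde\beta^2(0,t)$ in \eqref{m} are non-positive and can be dropped to obtain
$$\dot m(t)\leq -\eta m(t)+\theta_m d^2(t)\leq -\Bigl(\eta+\tfrac{\theta_m}{\theta}\Bigr)m(t).$$
Setting $\gamma:=\eta+\theta_m/\theta>0$ and using the integrating factor $e^{\gamma t}$ gives $\tfrac{d}{dt}\bigl(e^{\gamma t}m(t)\bigr)\leq 0$, hence $m(t)\leq m^0 e^{-\gamma t}<0$ for all $t\in[0,F)$, which is the desired strict negativity.

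The only delicate point is bookkeeping across events: $d(t)$ jumps to zero at every event time $t_k^c$, while $m$ is continuous there by the convention $m(t_k^{c-})=m(t_k^c)=m(t_k^{c+})$ adopted in Definition~\ref{defn:triggering}. I would therefore apply the integrating-factor estimate on each inter-event interval $[t_k^c,t_{k+1}^c)$ separately and then concatenate the bounds using this continuity, so that the exponential decay bound propagates seamlessly through all events up to $F$. This is essentially routine and is the main (and rather mild) obstacle of the argument.
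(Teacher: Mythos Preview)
Your proposal is correct and follows essentially the same route as the paper: use the triggering rule to obtain $\theta d^2(t)\leq -m(t)$ on each inter-event interval, substitute into \eqref{m}, apply a comparison/integrating-factor argument to get the differential inequality $\dot m\leq -(\eta+\theta_m/\theta)m$ (the paper retains the non-positive $\kappa_i$-terms but this is immaterial), and then concatenate across events via the continuity convention $m(t_k^{c-})=m(t_k^c)=m(t_k^{c+})$. Your explicit bound $m(t)\leq m^0 e^{-\gamma t}$ is a slight strengthening of what the paper states, but the argument is otherwise identical.
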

\begin{proof}
    Note that this proof follows the proof of \cite[Lemma 1]{espitiaObserverbased2020}. The event-triggering rule ensures that $\theta d^2(t) \leq -m(t)$. Thus, using \eqref{m}, we can obtain the following inequality :
    \begin{align}
        \dot{m}(t)\leq&-\left(\eta+\frac{\theta_m}{\theta}\right) m(t) - \kappa_0 \|(\hat{\alpha}(\cdot,t),\hat{\beta}(\cdot,t))^T\|^2 \nonumber\\& -\kappa_1\hat{\alpha}^2(\ell,t)-\kappa_2\tilde{\beta}^2(0,t),
    \end{align}
    for all $t\in(t_k^c,t_{k+1}^c),k\in\mathbb{N}$. Assume that $m(t_k^c)<0$. Considering the continuity of $m(t)$ and using the comparison principle, it can be seen that $m(t)<0$ for $t\in(t_k^c,t_{k+1}^c),k\in\mathbb{N}$. Since $m(t)$ is defined such that $m(t_k^{c^-})=m(t_k^c)=m(t_{k+1}^{c^+})$, it holds that  $m(t)<0$ for all $t\in[t_k^c,t_{k+1}^c],k\in\mathbb{N}$. Successively using the same argument for all intervals in $I^c$ and recalling that $m^0<0$, we obtain $m(t)<0$ for all $t\in[0,F)$, where $F=\sup\{I^c\}$. \hfill 
\end{proof}

The existence of a minimum dwell-time $\tau>0$ is shown in the following lemma.

\begin{lemm}\label{lem:zeno} Let the CETC events be triggered according to the rule \eqref{tck}-\eqref{m}. 
Furthermore, let $\sigma\in(0,1)$ and $\theta$ be free parameters, and $\kappa_0,\kappa_1,\kappa_2>0$ satisfy
    \begin{align}
        \theta \epsilon_i=(1-\sigma)\kappa_i,\quad\text{for}~i=0,1,2,\label{eqkappa_i}
    \end{align}
where $\epsilon_0,\epsilon_1,\epsilon_2$ are given by \eqref{epsilon0}-\eqref{epsilon2}, respectively. Then, there exists a minimum dwell-time $\tau>0$ such that $t_{k+1}^c-t_k^c\geq\tau$ for all $k\in\mathbb{N}$.
\end{lemm}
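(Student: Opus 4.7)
The plan is to adapt the standard event-triggered dwell-time argument by working with the dimensionless ratio
\begin{align*}
\phi(t) := \frac{\theta d^2(t)}{-m(t)},
\end{align*}
which is well-defined on each inter-event interval by Lemma~\ref{lem:m<0} and lies in $[0,1]$ by the triggering rule. Since the control update forces $d(t_k^c)=0$ while $m$ is continuous, $\phi(t_k^c)=0$; and since $\Gamma^c>0$ is equivalent to $\phi>1$, the next event fires precisely when $\phi$ reaches $1$ from below. Because $\phi$ is continuous on $(t_k^c,t_{k+1}^c)$, it must pass through the intermediate value $1-\sigma$ on its way to $1$, so it suffices to lower bound the time required for $\phi$ to travel from $1-\sigma$ to $1$.

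I would then differentiate $\phi$ along trajectories, obtaining
\begin{align*}
\dot\phi \;=\; \frac{2\theta d\,\dot d}{-m} + \frac{\theta d^2\,\dot m}{m^2}.
\end{align*}
Applying Young's inequality $2d\dot d \leq d^2 + \dot d^2$ to the first term, bounding $(\dot d)^2$ via Lemma~\ref{lem:ddot2}, substituting the ODE \eqref{m} for $\dot m$, and using the identities $\theta d^2/(-m)=\phi$ and $\theta d^4/m^2=\phi^2/\theta$, one collects the terms into
\begin{align*}
\dot\phi \leq{}& (1+\eta+\epsilon_3)\phi + \frac{\theta_m}{\theta}\phi^2 \\
&+ \frac{(\theta\epsilon_0-\phi\kappa_0)\|(\hat\alpha,\hat\beta)^T\|^2}{-m} \\
&+ \frac{(\theta\epsilon_1-\phi\kappa_1)\hat\alpha^2(\ell,t)+(\theta\epsilon_2-\phi\kappa_2)\tilde\beta^2(0,t)}{-m}.
\end{align*}
Invoking the tuning relation $\theta\epsilon_i=(1-\sigma)\kappa_i$ from \eqref{eqkappa_i} collapses each of the three numerator coefficients to $(1-\sigma-\phi)\kappa_i$; for $\phi\geq 1-\sigma$ all three are nonpositive, and since the remaining factors in the fractions are nonnegative, the whole bracketed contribution drops out of the estimate.

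What remains is the purely scalar Bernoulli inequality $\dot\phi \leq A\phi^2+B\phi$ with $A:=\theta_m/\theta$ and $B:=1+\eta+\epsilon_3$, valid throughout $\phi\in[1-\sigma,1]$. A comparison argument with $\dot y = Ay^2+By$ then produces the explicit uniform dwell time
\begin{align*}
\tau := \int_{1-\sigma}^{1}\frac{dy}{Ay^2+By} \;=\; \frac{1}{B}\ln\!\left(\frac{A(1-\sigma)+B}{(1-\sigma)(A+B)}\right) > 0,
\end{align*}
and continuity of $\phi$ together with the intermediate-value argument above implies $t_{k+1}^c-t_k^c\geq\tau$ for every $k\in\mathbb{N}$.

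The main obstacle, and the reason for confining the argument to $\phi\geq 1-\sigma$, is that the state-dependent ratios $\|(\hat\alpha,\hat\beta)^T\|^2/(-m)$, $\hat\alpha^2(\ell,t)/(-m)$, and $\tilde\beta^2(0,t)/(-m)$ have no a priori uniform bound and cannot be tamed directly for small $\phi$. The condition $\theta\epsilon_i=(1-\sigma)\kappa_i$ is engineered precisely so that the sign-indefinite contributions arising from Lemma~\ref{lem:ddot2} and from the $\dot m$ ODE combine into the single coefficient $(1-\sigma-\phi)\kappa_i$, which becomes nonpositive on the upper portion of $[0,1]$; continuity of $\phi$ then converts this partial dissipativity into the required state-independent positive dwell time.
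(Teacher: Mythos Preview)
Your proof is correct and follows essentially the same strategy as the paper's: the paper's auxiliary variable $\psi(t)=\frac{\theta d^2(t)+(1-\sigma)m(t)}{-\sigma m(t)}$ is just the affine reparameterization $\psi=(\phi-(1-\sigma))/\sigma$ of your $\phi$, so your integration over $\phi\in[1-\sigma,1]$ coincides with the paper's integration over $\psi\in[0,1]$, and the resulting dwell time $\tau$ agrees with the paper's closed form \eqref{tau} with $a=B=1+\epsilon_3+\eta$.
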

\begin{proof}
    Note that this proof follows the proof of \cite[Theorem 1]{espitiaObserverbased2020}. Due to the triggering rule \eqref{tck},\eqref{gamma_c}, the following holds:
    \begin{align}
        \theta d^2(t) \leq -\sigma m(t) -(1-\sigma)m(t).
    \end{align}
    Rearranging this inequality, let us define
    \begin{align}
        \psi(t):=\frac{\theta d^2(t) + (1-\sigma)m(t)}{-\sigma m(t)}.\label{psi}
    \end{align}
    Let an event be triggered at time $t=t_k^c,~k\in\mathbb{N}$, then, $d(t_k^c)=0$ and since $\sigma\in(0,1)$ and $m(t_k^c)<0$, it can be seen that $\psi(t_k^{c})<0$ and $\psi(t_{k+1}^{c^-})=1$. Also note that $d(t)$ and $m(t)$ are continuous on $t\in(t_k^c,t_{k+1}^c)$ making $\psi(t)$ a continuous function on $t\in(t_k^c,t_{k+1}^{c})$. Therefore, from the intermediate value theorem, $\exists t_k^\prime\in(t_k^c,t_{k+1}^{c})$ such that $\psi(t)\in[0,1]$ for $t\in[t_k^\prime,t_{k+1}^{c^-}]$. Taking the time derivative of $\psi(t)$ on $t\in(t_k^\prime,t_{k+1}^{c^-})$ and using \eqref{eqkappa_i}, we can derive the following expression following a similar procedure as in \cite[Theorem 1]{espitiaObserverbased2020}:
    \begin{align}
        \dot{\psi}(t)\leq a_0 + a_1 \psi(t) + a_2 \psi^2(t),
    \end{align}
    where  $a_0,a_1,a_2>0$ are defined as 
\begin{align}
    a_0=&\left(1+\epsilon_3+\eta +\frac{\theta_m(1-\sigma)}{\theta}\right)\frac{(1-\sigma)}{\sigma},\\
    a_1=&1+\epsilon_3+\eta+\frac{2\theta_m(1-\sigma)}{\theta},\quad 
    a_2=\frac{\theta_m\sigma}{\theta}.
\end{align}
Therefore, using the comparison principle, the time required for $\psi(t)$ to change from $\psi(t_k^\prime)=0$ to $\psi(t_{k+1}^{c^-})=1$ is at least
    \begin{align}
        \tau=\int_0^1\frac{1}{a_0+a_1s+a_2s^2}ds.\label{tau_int}
    \end{align}
    Performing partial fraction expansion  and computing  \eqref{tau_int}, one gets
    \begin{align}
        \tau=\frac{1}{a}\ln\left(1+\frac{a\theta\sigma}{(a\theta + \theta_m )(1-\sigma)}\right),\label{tau}
    \end{align}
   where 
   \begin{equation}\label{aaaa}
       a=1+\epsilon_3+\eta>0
   \end{equation}  
Since $t_{k+1}^c-t_k^\prime>\tau$ and $t_{k+1}^c-t_k^c>t_{k+1}^c-t_k^\prime$, $t_{k+1}^c-t_k^c>\tau$, we can consider $\tau>$ as the minimum dwell-time, namely, the lower bound for the time between two triggering events, which excludes the occurrence  of the Zeno phenomenon for the event-triggering rule \eqref{tck}-\eqref{m}. \hfill 
\end{proof}

We establish the global exponential convergence of closed-loop system \eqref{sys_u}-\eqref{sys_BC1},\eqref{obs_u}-\eqref{obs_BC1},\eqref{U_d},\eqref{sys_BC2_d},\eqref{obs_BC2_d} in the following theorem.
\begin{propo}\label{propo:lyapunov stability}
Subject to Assumption \ref{assum:reflection}, let $\eta,\theta>0$ and the parameter $\theta_m$, be selected such that
\begin{align}
    \theta_m=2De^{\mu\frac{\ell}{\lambda_2}}, \label{thetam}
\end{align}
where
\begin{align}
    D=&2Cq^2,\label{D}\\
    C>&\max\left\{\frac{\kappa_0}{(\mu-\delta)r},\frac{\kappa_1}{1-4\rho^2q^2e^{\mu\left(\frac{\ell}{\lambda_1}+\frac{\ell}{\lambda_2}\right)}}\right\},\label{C}\\
    \mu\in&\left(0,\frac{2\lambda_1\lambda_2}{\ell(\lambda_1+\lambda_2)}\ln\left(\frac{1}{2|q\rho|}\right)\right),\quad \delta<\mu,\label{mu,delta}\\
    r=&\min\left\{\frac{1}{\lambda_1}e^{-\mu\frac{\ell}{\lambda_1}},\frac{2q^2}{\lambda_2}\right\},\label{r}
\end{align}
and $\kappa_0$, $\kappa_1>0$ satisfy \eqref{eqkappa_i}. Then, under the CETC triggering rule \eqref{tck}-\eqref{m}, the observer-based CETC closed-loop system \eqref{sys_u}-\eqref{sys_BC1},\eqref{obs_u}-\eqref{obs_BC1},\eqref{U_d},\eqref{sys_BC2_d},\eqref{obs_BC2_d} has a unique solution $(u,v,\hat{u}, \hat{v})^T \in \mathcal{C}^0(\mathbb{R}^+; L^2((0,\ell);\mathbb{R}^4))$, and the closed-loop system states globally exponentially converge to $0$ in the spatial $L^2$ norm. 
\end{propo}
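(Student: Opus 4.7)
The plan is to first invoke well-posedness and then construct a composite Lyapunov functional that dominates the dynamic variable $m(t)$, and to close the loop via a careful balance of the event-trigger parameters and the rate constants $\mu,\delta$.

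\emph{Step 1: Well-posedness on $\mathbb{R}^+$.} Since Lemma~\ref{lem:zeno} supplies a strictly positive minimum dwell-time $\tau>0$ between consecutive CETC events, Corollary~\ref{cor:existece} gives a unique solution $(u,v,\hat u,\hat v)^T \in \mathcal{C}^0(\mathbb{R}^+; L^2((0,\ell);\mathbb{R}^4))$ of the plant/observer under the held input $U_k^c(t)$. Because the kernels of the direct and inverse backstepping transformations are bounded on the triangular domain, the corresponding target states $(\tilde\alpha,\tilde\beta,\hat\alpha,\hat\beta)^T$ enjoy the same regularity, so it suffices to prove exponential decay of the target-system $L^2$ norm; boundedness of the backstepping maps then transports the decay back to the original states.

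\emph{Step 2: Choice of Lyapunov functional.} I will take a functional of the form
\begin{align*}
V(t) = V^1(t) + C\,V^2(t) - D\,m(t),
\end{align*}
where, in analogy with $V_0^1, V_0^2$ from the preceding proposition but with the new rate $\mu$ and parameters $A,B$ chosen compatibly with \eqref{mu,delta},
\begin{align*}
V^1(t) &= \int_0^\ell\!\!\Big(\tfrac{A}{\lambda_1}\tilde\alpha^2 e^{-\mu x/\lambda_1} + \tfrac{B}{\lambda_2}\tilde\beta^2 e^{\mu x/\lambda_2}\Big)dx,\\
V^2(t) &= \int_0^\ell\!\!\Big(\tfrac{A}{\lambda_1}\hat\alpha^2 e^{-\mu x/\lambda_1} + \tfrac{B}{\lambda_2}\hat\beta^2 e^{\mu x/\lambda_2}\Big)dx.
\end{align*}
Since $m(t)<0$ on $[0,F)$ by Lemma~\ref{lem:m<0}, the term $-Dm(t)$ is nonnegative and $V$ is a genuine Lyapunov candidate equivalent to $\|(\tilde\alpha,\tilde\beta,\hat\alpha,\hat\beta)^T\|^2 + |m|$.

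\emph{Step 3: Differentiating along trajectories.} Differentiating $V^1$ along \eqref{err_target_u}--\eqref{err_target_BC2} and $V^2$ along \eqref{obs_target_u}--\eqref{obs_target_BC1},\eqref{obs_target_BC2_d}, performing integration by parts, and absorbing the coupling terms $\bar p_1\tilde\beta(0,t)$, $\bar p_2\tilde\beta(0,t)$ via Young's inequality, one obtains boundary contributions at $x=0,\ell$. The boundary terms at $x=\ell$ combine $\rho\hat\alpha(\ell,t)$ with $d(t)$ through $\hat\beta(\ell,t)=\rho\hat\alpha(\ell,t)+d(t)$, contributing a $d^2(t)$ and a $\hat\alpha^2(\ell,t)$ term; the boundary terms at $x=0$ contribute a $\tilde\beta^2(0,t)$ piece through \eqref{obs_target_BC1}. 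Adding $-D\dot m(t)$ from \eqref{m} produces $+D\eta(-m) - D\theta_m d^2 + D\kappa_0\|(\hat\alpha,\hat\beta)\|^2 + D\kappa_1\hat\alpha^2(\ell,t)+D\kappa_2\tilde\beta^2(0,t)$, and these are exactly the right sign patterns to cancel the indefinite terms produced by $\dot V^2$ provided the constants satisfy \eqref{thetam}--\eqref{r}. Specifically, $D=2Cq^2$ and $\theta_m = 2De^{\mu\ell/\lambda_2}$ are designed to dominate the boundary $\tilde\beta^2(0,t)$ contribution from \eqref{obs_target_BC1} and the $d^2(t)$ contribution from \eqref{obs_target_BC2_d}, respectively, while $C$ chosen as in \eqref{C} guarantees $D\kappa_1<1-4\rho^2q^2e^{\mu(\ell/\lambda_1+\ell/\lambda_2)}$ so that $\hat\alpha^2(\ell,t)$ is absorbed, and $D\kappa_0<(\mu-\delta)Cr$ so the distributed residual is dominated by the in-domain decay rate. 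Under Assumption~\ref{assum:reflection} and the range of $\mu$ in \eqref{mu,delta}, the remaining indefinite boundary pieces at $x=\ell$ (coming through the reflection $\rho$) are controlled by the factor $(1-2\rho^2q^2e^{\mu(\ell/\lambda_1+\ell/\lambda_2)})>0$ as in the continuous-time analysis.

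\emph{Step 4: Closing the estimate.} Collecting all terms yields $\dot V(t) \le -\delta\, V(t)$ for $t\in(t_k^c,t_{k+1}^c)$, with $\delta<\mu$ fixed by \eqref{mu,delta}. Because $m(t)$ is continuous at events and $V$ has no jumps (the boundary input is held constant across $t_k^c$), the inequality propagates across the whole ray and gives $V(t)\le V(0)e^{-\delta t}$. Equivalence of $V$ with the target-state norm (plus $|m|$), together with the $L^2$-boundedness of the four backstepping transformations and their inverses, delivers exponential decay of $\|(u,v,\hat u,\hat v)^T\|$ in the spatial $L^2$ norm on $\mathbb{R}^+$.

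\emph{Main obstacle.} The delicate step is Step~3: one must verify that the precise constants \eqref{thetam}--\eqref{r} are simultaneously tight enough to absorb (i) the boundary $\tilde\beta^2(0,t)$ coupling from $\hat\alpha(0,t)=q\hat\beta(0,t)+q\tilde\beta(0,t)$, (ii) the $d^2(t)$ term from the perturbed boundary \eqref{obs_target_BC2_d}, and (iii) the $\hat\alpha^2(\ell,t)$ reflection term through $\rho$, while still leaving a strictly negative residual proportional to $V$. Tracking the exponential weights $e^{-\mu x/\lambda_1}$ and $e^{\mu x/\lambda_2}$ through every boundary evaluation at $x=0,\ell$ is the part of the computation where the choice of $A,B,C,D,\theta_m$ in the statement becomes essential, and it is where mirroring the CETC analysis of \cite{espitiaObserverbased2020} (adapted to the anti-collocated setting) is required.
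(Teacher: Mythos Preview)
Your overall architecture (well-posedness via the minimum dwell-time, then a weighted $L^2$ Lyapunov functional on the target states augmented by $-m$, then transporting decay back through the bounded backstepping maps) matches the paper. However, the specific Lyapunov candidate you propose does \emph{not} interface correctly with the parameter choices \eqref{thetam}--\eqref{r} in the statement, and this is a genuine gap rather than a cosmetic one. The paper takes
\[
W(t)=V_1(t)+V_2(t)-m(t),
\]
where $V_1$ carries \emph{independent} weights $A,B$ (for $\tilde\alpha,\tilde\beta$) and $V_2$ carries the weights $C,D$ from the statement (for $\hat\alpha,\hat\beta$), with coefficient $1$ on $-m$. Your candidate $V=V^1+C\,V^2-D\,m$ with the \emph{same} pair $A,B$ in both $V^1$ and $V^2$ breaks two things simultaneously. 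First, since $-D\dot m$ injects $+D\kappa_1\hat\alpha^2(\ell,t)$ and $+D\kappa_0\|(\hat\alpha,\hat\beta)\|^2$, the conditions you would need are $C>\tfrac{D\kappa_1}{1-4\rho^2q^2e^{\mu(\ell/\lambda_1+\ell/\lambda_2)}}$ and $C>\tfrac{D\kappa_0}{(\mu-\delta)r}$; plugging $D=2Cq^2$ makes these circular in $C$ and no longer equivalent to \eqref{C}. Second, tying $V^1$ to the same $A,B$ as $V^2$ removes the freedom the paper uses: after fixing $C,D$ from \eqref{D},\eqref{C}, the paper \emph{then} chooses $B$ (and $A=B\rho^2e^{\mu(\ell/\lambda_1+\ell/\lambda_2)}$) large enough to absorb the residual $\tilde\beta^2(0,t)$ terms coming from the $\bar p_i$ cross-couplings and the $\kappa_2$ contribution of $-\dot m$. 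With your coupling you cannot tune $A,B$ in $V^1$ independently, so that absorption step fails.

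Concretely: replace your candidate by $W=V_1+V_2-m$ with $V_2$ weighted directly by $C/\lambda_1$ and $D/\lambda_2$ (not $CA/\lambda_1$, $CB/\lambda_2$) and with coefficient $1$ on $-m$. Then $\theta_m=2De^{\mu\ell/\lambda_2}$ exactly cancels the $d^2(t)$ term produced at $x=\ell$ by $V_2$; $D=2Cq^2$ exactly balances the $\hat\beta^2(0,t)$ boundary term from \eqref{obs_target_BC1}; the two branches of \eqref{C} ensure positivity of the $\hat\alpha^2(\ell,t)$ coefficient and of the in-domain rate $\mu-\delta-\kappa_0/(Cr)$; and finally $A,B$ in $V_1$ are chosen last to kill the leftover $\tilde\beta^2(0,t)$. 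The resulting rate is $v^*=\min\{\mu-\delta-\kappa_0/(Cr),\,\eta\}$, not $\delta$ as you wrote in Step~4.
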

\begin{proof}
Using Corollary~\ref{cor:existece} and Lemma~\ref{lem:zeno}, we can conclude that the closed-loop system \eqref{sys_u}-\eqref{sys_BC1},\eqref{obs_u}-\eqref{obs_BC1},\eqref{U_d},\eqref{sys_BC2_d},\eqref{obs_BC2_d}, has a unique solution $(u,v,\hat{u}, \hat{v})^T \in \mathcal{C}^0(\mathbb{R}^+; L^2((0,\ell);\mathbb{R}^4))$. Consider the following Lyapunov function for the systems \eqref{err_target_u}-\eqref{err_target_BC2} and \eqref{obs_target_u}-\eqref{obs_target_BC1},\eqref{obs_target_BC2_d}:
\begin{align}
    W(t)=V_1(t)+V_2(t)-m(t),\label{w}
\end{align}
where
\begin{align}
    V_1(t)=&\int_0^\ell \left(\frac{A}{\lambda_1}\tilde{\alpha}^2(x,t)e^{-\mu\frac{x}{\lambda_1}}
    + \frac{B}{\lambda_2}\tilde{\beta}^2(x,t)e^{\mu\frac{x}{\lambda_2}}\right)dx,\label{v1}\\
    V_2(t)=&\int_0^\ell \left(\frac{C}{\lambda_1}\hat{\alpha}^2(x,t)e^{-\mu\frac{x}{\lambda_1}}
    + \frac{D}{\lambda_2}\hat{\beta}^2(x,t)e^{\mu\frac{x}{\lambda_2}}\right)dx,\label{v2}
\end{align}
with $\mu,A,B,C,D>0$.Note that there exists $\tilde{r}_{v1}$, $\tilde{r}_{v2}$, $\hat{r}_{v1}$, and $\hat{r}_{v2}$ such that
\begin{align}
    &\tilde{r}_{v1}\|\tilde{\alpha}(\cdot,t),\tilde{\beta}(\cdot,t))^T\|^2\leq V_1\leq\tilde{r}_{v2}\|\tilde{\alpha}(\cdot,t),\tilde{\beta}(\cdot,t))^T\|^2,\label{v1_bounds}\\
    &\hat{r}_{v1}\|\hat{\alpha}(\cdot,t),\hat{\beta}(\cdot,t))^T\|^2\leq V_2\leq\hat{r}_{v2}\|\hat{\alpha}(\cdot,t),\hat{\beta}(\cdot,t))^T\|^2,\label{v2_bounds}\\
    &\tilde{r}_{v1}=\min\left\{\frac{A}{\lambda_1}e^{-\mu\frac{\ell}{\lambda_1}},\frac{B}{\lambda_2}\right\},\mkern9mu\tilde{r}_{v2}=\max\left\{\frac{A}{\lambda_1},\frac{B}{\lambda_2}e^{\mu\frac{\ell}{\lambda_2}}\right\},\nonumber\\
    &\hat{r}_{v1}=\min\left\{\frac{C}{\lambda_1}e^{-\mu\frac{\ell}{\lambda_1}},\frac{D}{\lambda_2}\right\},\mkern9mu\hat{r}_{v2}=\max\left\{\frac{C}{\lambda_1},\frac{D}{\lambda_2}e^{\mu\frac{\ell}{\lambda_2}}\right\}.\nonumber
\end{align}

Differentiating \eqref{w} with respect to time and using the equations \eqref{err_target_u}-\eqref{err_target_BC2}, \eqref{obs_target_u}-\eqref{obs_target_BC1},\eqref{obs_target_BC2_d} along with \eqref{v1},\eqref{v2} and \eqref{m} with $\theta_m$ chosen as in \eqref{thetam}, and then using Young's inequality for any $\delta>0$, we obtain the following estimate:
\begin{align}
   & \dot{W}\leq-\mu V_1(t) +\eta m(t) - \left(\mu-\delta-\frac{\kappa_0}{\hat{r}_{v1}}\right)V_2(t)\nonumber\\
   &- (Ae^{-\mu\frac{\ell}{\lambda_1}}-B\rho^2e^{\mu\frac{\ell}{\lambda_2}})\tilde{\alpha}^2(\ell,t)- (D-2Cq^2)\hat{\beta}^2(0,t)\nonumber\\
   &-(Ce^{-\mu\frac{\ell}{\lambda_1}}-2D\rho^2e^{\mu\frac{\ell}{\lambda_2}}-\kappa_1)\hat{\beta}^2(\ell,t)\nonumber\\
   &-\Bigg(-\frac{1}{\delta}\int_0^\ell \frac{C}{\lambda_1}\bar{P}_1^2(x)e^{-\mu\frac{\ell}{\lambda_1}}+\frac{D}{\lambda_1}\bar{P}_2^2(x)e^{\mu\frac{\ell}{\lambda_2}}dx \nonumber\\
   &\mkern200mu+B-2Cq^2\Bigg)\tilde{\beta}^2(0,t).
\end{align}
Thereafter define $\delta<\mu$ and the constants $C$, $D$ as given in \eqref{C},\eqref{D} respectively and $A$, $B$ as given below
\begin{align*}
    A=&B\rho^2e^{\mu\left(\frac{\ell}{\lambda_1}+\frac{\ell}{\lambda_2}\right)},\\
    B=&\kappa_2+D+\frac{1}{\delta}\int_0^\ell\left(\frac{C}{\lambda_1}e^{-\mu\frac{x}{\lambda_1}}\bar{p}_1 ^2(x)  +  \frac{D}{\lambda_2}e^{\mu\frac{x}{\lambda_2}}\bar{p}_2^2(x)\right)dx,
\end{align*}
where
\begin{align*}
    r=\min\left\{\frac{1}{\lambda_1}e^{-\mu\frac{\ell}{\lambda_1}},\frac{2q^2}{\lambda_2}\right\}.
\end{align*}
for any $\theta>0$ and $\sigma\in(0,1)$ define $\kappa_0$ $\kappa_1$, and $\kappa_2$ by \eqref{eqkappa_i}. Hence, we obtain the following upper bound for $\dot{W}(t)$:
\begin{align}
    \dot{W}&\leq-v^*W(t),\\
    v^*&=\min\left\{\mu-\delta-\frac{\kappa_0}{\hat{r}_{v1}},\eta\right\}.\nonumber
\end{align}
Using the comparison principle, the following estimate holds for $W(t)$:
\begin{align}
    W(t)\leq W(0)e^{-v^*t},
\end{align}
from \eqref{w}, and noting that $m(t)<0$, we see that
\begin{align}
    V_1(t)+V_2(t)\leq W(0)e^{-v^*t}.
\end{align}
from \eqref{v1_bounds}, \eqref{v2_bounds}, the following inequality holds for the $L^2$-norm of the states of the system \eqref{err_target_u}-\eqref{err_target_BC2}, \eqref{obs_target_u}-\eqref{obs_target_BC1},\eqref{obs_target_BC2_d}:
\begin{align}
    &\|(\tilde{\alpha}(\cdot,t),\tilde{\beta}(\cdot,t))^T\|^2 + \|(\hat{\alpha}(\cdot,t),\hat{\beta}(\cdot,t))^T\|^2 \leq  \nonumber\\
    &\mkern50mu-\frac{1}{\min\{\tilde{r}_{v1},\hat{r}_{v1}\}}m^0e^{-v^*t}\nonumber\\
    &\mkern50mu+ \frac{\max\{\tilde{r}_{v2},\hat{r}_{v2}\}}{\min\{\tilde{r}_{v1},\hat{r}_{v1}\}} (\|(\tilde{\alpha}(\cdot,0),\tilde{\beta}(\cdot,0))^T\|^2 \nonumber\\
    &\mkern50mu+ \|(\hat{\alpha}(\cdot,0),\hat{\beta}(\cdot,0))^T\|^2)e^{-v^*t}.
\end{align}
Hence, it can be seen that the target systems globally exponentially converge to $0$. Using the bounded invertibility of transformations  \eqref{err_backstepping1},\eqref{err_backstepping2},\eqref{err_backstepping1-inverse},\eqref{err_backstepping2-inverse},\eqref{obs_backstepping_1},\eqref{obs_backstepping_2},\eqref{backstepping_1-inverse}, and \eqref{backstepping_2-inverse}, we can show that the observer-based CETC closed-loop system \eqref{sys_u}-\eqref{sys_BC1},\eqref{obs_u}-\eqref{obs_BC1},\eqref{U_d},\eqref{sys_BC2_d},\eqref{obs_BC2_d} globally exponentially converges to $0$ in the spatial $L^2$ norm. \hfill 
\end{proof}

\begin{remark}
The variables $\theta$ and $\eta$ are free design parameters that can be varied to tune the performance of the controller, in contrast to the CETC design in \cite{espitiaObserverbased2020}, where these parameters are restricted. Also, as evident by the proof of Proposition~\ref{propo:lyapunov stability}, larger $\eta$, larger $\mu$, and smaller $\delta$ would result in a higher convergence rate.
\end{remark}

\section{Periodic event-triggered control (PETC) }\label{sec:petc}


Unlike the CETC approach, in PETC, the triggering function $\Gamma^p(t)$ to be designed is only evaluated periodically. An increasing sequence of PETC times, $I^p=\{t_k^p\}_{k\in\mathbb{N}}$, at which the control input $U_k^p(t)$ is updated, is determined according to the following rule:
\begin{definition}\label{defn:triggering_petc}
The increasing sequence of event times $I^p=\{t_k^p\}_{k\in\mathbb{N}}$ with $t_0^p=0$ is determined according to the following rule:
\begin{align}
    t_{k+1}^p=&\inf\{t\in \mathbb{R}|t>t_k^p,\Gamma^p(t)>0,k\in\mathbb{N},\nonumber\\
    &\mkern120mut=nh, n\in\mathbb{N}, h>0\},\label{tpk}\\
    \Gamma^p(t):=&\left(e^{ah}(\theta_m+a\theta)-\theta_m \right)d^2(t)+am(t),\label{gamma_p}
\end{align}
where $h>0$ is an appropriate sampling period to be chosen. Let $\eta,\theta>0$  be arbitrary parameters. The constants $a$ and $\theta_m$ are chosen as in \eqref{aaaa} and \eqref{thetam} respectively. The function $d(t)$ is given by \eqref{d}, and $m(t)$ evolves according to \eqref{m} with $m(0)=m^0<0,~m(t_k^{p^-})=m(t_k^p)=m(t_k^{p^+})~k\in\mathbb{N}$.
\end{definition}
\subsection{Sampling period selection and design of the triggering function}
Let us first focus on the selection of the sampling period $h>0$. Assume that triggering according to the PETC triggering rule \eqref{tpk},\eqref{gamma_p} ensures that the CETC triggering function $\Gamma^c(t)$ given by \eqref{gamma_c} satisfies $\Gamma^c(t)\leq 0$ for all $t\in\mathbb{R}^+$ along the PETC closed-loop solution. Then, similarly from Lemma \ref{lem:m<0}, it holds that $m(t)$ governed by \eqref{m} satisfies $m(t)<0$ for all $t\in\mathbb{R}^+$. Let an event be triggered at $t=t^p_k$, then from \eqref{gamma_c}, we see that $\Gamma^c(t_k^p)=m(t_k^p)<0$. Then, due to the existence of a minimum dwell-time, we know that $\Gamma^c(t)$ remains negative at least until $t=t_k^p + \tau$. Therefore, let us select the sampling period $h$ as 
\begin{align}0<h\leq \tau.\label{hsmpl}\end{align}

Let us now focus on the design of the periodic event-triggering function $\Gamma^p(t)$. 

\begin{propo}\label{prop:gamma_c_eq} Consider the set of increasing event times $I^p=\{t_k^p\}_{k\in\mathbb{N}}$ with $t_0^p=0$ generated by the PETC triggering rule \eqref{tpk},\eqref{gamma_p} with the sampling period $h>0$ chosen as in \eqref{hsmpl}. Then, the CETC triggering function $\Gamma^c(t)$ given by \eqref{gamma_c} satisfies the following relation
    \begin{align}
        \Gamma^c(t)\leq& \frac{e^{-\eta(t-nh)}}{a}\Bigg[\left(e^{a(t-nh)}(\theta_m+a\theta)-\theta_m\right)d^2(nh)\nonumber\\
        &\qquad+am(nh)\Bigg],\label{gamma_c petc}
    \end{align}
    for all $t\in[nh,(n+1)h),~n\in [t_k^p/h,t_{k+1}^p/h)\cap \mathbb{N},k\in\mathbb{N}$ where $\eta,\theta_m>0$, and $\theta,\kappa_i$, $i=0,1,2$ satisfy \eqref{eqkappa_i}, and $a$ is given by \eqref{aaaa}.
\end{propo}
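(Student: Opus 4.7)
The plan is to establish the target bound on $\Gamma^c(t)$ over $[nh,(n+1)h)$ via a three-stage Gronwall/comparison argument: derive a key differential inequality for $\Gamma^c$ driven by $d^2(t)$, derive a companion pointwise growth bound on $d^2(t)$, and compare $\Gamma^c$ against a candidate upper bound $\Phi(T)$ whose closed form matches the target right-hand side.

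First, I would differentiate $\Gamma^c = \theta d^2 + m$ to get $\dot{\Gamma}^c = 2\theta d \dot d + \dot m$, apply Young's inequality $2d\dot d \le d^2 + \dot d^2$, substitute the bound from Lemma~\ref{lem:ddot2}, and use the tuning condition \eqref{eqkappa_i} together with the ODE \eqref{m} for $\dot m$. The terms $\theta\epsilon_i\|\cdot\|^2$ combine with $-\kappa_i\|\cdot\|^2$ to produce the nonpositive contribution $-\sigma\kappa_i\|\cdot\|^2$, which I discard. Collecting terms and substituting $\eta m = \eta\Gamma^c - \eta\theta d^2$ together with $a = 1+\epsilon_3+\eta$ yields the key differential inequality $\dot{\Gamma}^c(t) + \eta\Gamma^c(t) \le (a\theta + \theta_m)\,d^2(t)$.

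Next, I would define the candidate bound $\Phi(T) = \frac{e^{-\eta T}}{a}[(e^{aT}(\theta_m + a\theta) - \theta_m)d^2(nh) + a m(nh)]$ and verify by direct computation that $\Phi(0) = \Gamma^c(nh)$ and $\dot{\Phi}(T) + \eta\Phi(T) = (a\theta + \theta_m)\,e^{(a-\eta)T}d^2(nh)$, so $\Phi(T)$ is precisely the solution of the ODE associated with the key inequality driven by the forcing $e^{(a-\eta)T}d^2(nh)$. I would then derive a companion pointwise bound $d^2(t) \le e^{(a-\eta)(t-nh)}d^2(nh)$ by a parallel manipulation: applying Young's inequality and Lemma~\ref{lem:ddot2} to $\tfrac{d}{dt}d^2$ and exploiting the exact identity $a-\eta = 1+\epsilon_3$, combined with the $m$-ODE and \eqref{eqkappa_i}, so that the state-dependent remainder absorbs against those arising in $\dot m$. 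Setting $\Delta(t) = \Gamma^c(t) - \Phi(t-nh)$ with $\Delta(nh) = 0$, the key inequality and this companion bound give $\dot{\Delta}(t) + \eta\Delta(t) \le (a\theta + \theta_m)\bigl[d^2(t) - e^{(a-\eta)(t-nh)}d^2(nh)\bigr] \le 0$, so $\Delta(t) \le 0$ on $[nh,(n+1)h)$ by the Gronwall comparison principle, which is exactly the claimed estimate.

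The main obstacle is the companion exponential bound on $d^2(t)$: since $\tfrac{d}{dt}d^2 \le (1+\epsilon_3)d^2 + (\text{state-dependent surplus})$, the surplus does not self-bound in $d^2$ alone, so I would need to carefully rearrange using the $m$-dynamics to absorb it, mirroring the same cancellation that yielded the differential inequality on $\Gamma^c$ but now tailored to produce a self-contained exponential estimate for $d^2$. The specific choices of $\theta_m$ in \eqref{thetam} and $a$ in \eqref{aaaa} are what make this absorption consistent.
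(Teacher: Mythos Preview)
Your first two stages are correct: the differential inequality $\dot\Gamma^c+\eta\Gamma^c\le(a\theta+\theta_m)d^2$ follows exactly as you describe, and your candidate $\Phi$ does satisfy $\Phi(0)=\Gamma^c(nh)$ and $\dot\Phi+\eta\Phi=(a\theta+\theta_m)e^{(a-\eta)T}d^2(nh)$.

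The gap is in stage three. The companion bound $d^2(t)\le e^{(a-\eta)(t-nh)}d^2(nh)$ cannot be obtained the way you suggest. From $\tfrac{d}{dt}d^2\le(1+\epsilon_3)d^2+\epsilon_0\|(\hat\alpha,\hat\beta)\|^2+\epsilon_1\hat\alpha^2(\ell,t)+\epsilon_2\tilde\beta^2(0,t)$ the surplus is nonnegative, so Gronwall gives an \emph{additional} positive integral, not the clean exponential you need. Your proposed remedy of ``absorbing via the $m$-dynamics'' does not close the loop: substituting $\kappa_0\|\cdot\|^2+\kappa_1\hat\alpha^2+\kappa_2\tilde\beta^2=-\dot m-\eta m+\theta_m d^2$ from \eqref{m} and scaling by $(1-\sigma)/\theta$ via \eqref{eqkappa_i} turns the inequality for $d^2$ into one that still contains $\dot m$ and $m$. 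You are then no longer bounding $d^2$ alone but a linear combination of $d^2$ and $m$, i.e.\ you are back to a genuinely coupled two-dimensional system. Note also that you cannot invoke $m<0$ here, since under PETC that fact is established only \emph{after} this proposition.

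The paper resolves this by embracing the coupling rather than trying to decouple it: it writes $\mathbf z=(\Gamma^c,m)^T$, obtains $\dot{\mathbf z}=\mathbf A\mathbf z+\mathbf B(t)$ with an explicit $2\times2$ matrix $\mathbf A$ having eigenvalues $-\eta$ and $1+\epsilon_3$, computes $e^{\mathbf A t}$ in closed form, and shows that the first row of $e^{\mathbf A(t-\xi)}\mathbf B(\xi)$ is nonpositive precisely because $h\le\tau$ and \eqref{eqkappa_i} hold. The bound then reduces to $\Gamma^c(t)\le\mathbf C e^{\mathbf A(t-nh)}\mathbf z(nh)$, which after substitution of \eqref{gamma_c} gives the claimed expression. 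The two-dimensional variation-of-constants is what replaces your unattainable scalar companion bound.
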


    \begin{figure*}[b]

        \begin{align}
        \mathbf{B}(t)=\begin{pmatrix}\begin{aligned}&\left(\theta \epsilon_0-\kappa_0\right)\|(\hat{\alpha}(\cdot,t),\hat{\beta}(\cdot,t))^T\|^2
         +\left(\theta \epsilon_1-\kappa_1\right)\hat{\alpha}^2(\ell,t)+\left(\theta \epsilon_2-\kappa_2\right)\tilde{\beta}^2(0,t)-\iota(t)\\
       &\kappa_0 \|(\hat{\alpha}(\cdot,t),\hat{\beta}(\cdot,t))^T\|^2
       -\kappa_1\hat{\alpha}^2(\ell,t)-\kappa_2\tilde{\beta}^2(0,t)\end{aligned}\end{pmatrix}.\label{B}
       \end{align}
    
    \end{figure*} 
\begin{proof}
    Consider a time interval $t\in[nh,(n+1)h),~n\in [t_k^p/h,t_{k+1}^p/h)\cap \mathbb{N},k\in\mathbb{N}$, note that $d(t)$, $m(t)$, and $\Gamma^c(t)$ are continuous for all $t\in(nh,(n+1)h)$. Taking the time derivative of \eqref{gamma_c} for $t\in(nh,(n+1)h)$, the following ODE holds:
    \begin{align*}
        \dot{\Gamma}^c(t)=2\theta d(t)\dot{d}(t)+\dot{m}(t).
    \end{align*}
    Using Young's inequality and \eqref{ddot2},\eqref{m} we can derive the following relation: 
    \begin{align*}
        \dot{\Gamma}^c(t)\leq&\left(1+\epsilon_3+\frac{\theta_m}{\theta}\right)\theta d^2(t) - \eta m(t)\nonumber\\
        &+\left(\theta \epsilon_0-\kappa_0\right)\|(\hat{\alpha}(\cdot,t),\hat{\beta}(\cdot,t))^T\|^2\nonumber\\
        &+\left(\theta \epsilon_1-\kappa_1\right)\hat{\alpha}^2(\ell,t)
        +\left(\theta \epsilon_2-\kappa_2\right)\tilde{\beta}^2(0,t).
    \end{align*}
    Substituting for $d^2(t)$ using \eqref{gamma_c} and introducing $\iota(t)\geq 0$ we arrive at the following ODE:
    \begin{align}
        \dot{\Gamma}^c(t)=&\left(1+\epsilon_3+\frac{\theta_m}{\theta}\right)\Gamma^c(t)-\left(a+\frac{\theta_m}{\theta}\right)m(t)\nonumber\\
         &-\iota(t)+\left(\theta\epsilon_0-\kappa_0\right)\|(\hat{\alpha}(\cdot,t),\hat{\beta}(\cdot,t))^T\|^2\nonumber\\
        &+\left(\theta \epsilon_1-\kappa_1\right)\hat{\alpha}^2(\ell,t)
        +\left(\theta \epsilon_2-\kappa_2\right)\tilde{\beta}^2(0,t).\label{dot_gammac}
    \end{align} 
    Using \eqref{gamma_c} and \eqref{m}, we obtain the following ODE
    \begin{align}
        \dot{m}(t)=&- \kappa_0 \|(\hat{\alpha}(\cdot,t),\hat{\beta}(\cdot,t))^T\|^2  -\kappa_1\hat{\alpha}^2(\ell,t)-\kappa_2\tilde{\beta}^2(0,t)\nonumber\\
        &+\frac{\theta_m}{\theta}\Gamma^c(t)-\left(\eta+\frac{\theta_m}{\theta}\right)m(t).\label{m(gammac)}
    \end{align}
   Define the following matrices
    \begin{align*}
        &\mathbf{z}(t)=\begin{pmatrix}\Gamma^c(t)\\m(t)\end{pmatrix},\
        \mathbf{A}=\begin{pmatrix}\left(1+\epsilon_3+\frac{\theta_m}{\theta}\right) & \left(-a-\frac{\theta_m}{\theta}\right)\\\frac{\theta_m}{\theta}&\left(-\eta-\frac{\theta_m}{\theta}\right)\end{pmatrix},
    \end{align*} 
   and let $\mathbf{B}(t)$ be as given in \eqref{B},
    then from \eqref{dot_gammac} and \eqref{m(gammac)} we obtain
    \begin{align}
        \dot{\mathbf{z}}(t)=\mathbf{A}\mathbf{z}(t)+\mathbf{B}(t).\label{zdot}
    \end{align}
    The solution to \eqref{zdot} for all $t\in[nh,(n+1)h),~n\in [t_k^p/h,t_{k+1}^p/h)\cap \mathbb{N},k\in\mathbb{N}$ is
    \begin{align*}
        \mathbf{z}(t)=e^{\mathbf{A}(t-nh)}\mathbf{z}(nh)+\int_{nh}^te^{\mathbf{A}(t-\xi)}\mathbf{B}(\xi)d\xi.
     \end{align*}
    Let $\mathbf{C}=(1\ 0)$. Since $\Gamma^c(t)=\mathbf{C}\mathbf{z}(t)$, $\Gamma^c(t)$ can be calculated as
    \begin{align*}
        \Gamma^c(t)=\mathbf{C}e^{\mathbf{A}(t-nh)}\mathbf{z}(nh)+\int_{nh}^t\mathbf{C}e^{\mathbf{A}(t-\xi)}\mathbf{B}(\xi)d\xi.
    \end{align*}
    Finding the eigenvalues and eigenvectors of $\mathbf{A}$, $e^{\mathbf{A}(t)}$ can be written as
    \begin{align*}
        e^{\mathbf{A}t}=&\begin{pmatrix}1 & \left(a +\frac{\theta_m}{\theta}\right)\\1 &\frac{\theta_m}{\theta}\end{pmatrix}
        \begin{pmatrix}e^{-\eta t} & 0 \\ 0 & e^{(1+\epsilon_3)t}\end{pmatrix}
        \begin{pmatrix}1 & \left(a +\frac{\theta_m}{\theta}\right)\\1 &\frac{\theta_m}{\theta}\end{pmatrix}^{-1},\\
        =&\frac{e^{-\eta t}}{a}\begin{pmatrix}\frac{\theta_m}{\theta}(e^{at}-1)+ae^{at}&\left(a+\frac{\theta_m}{\theta}\right)(1-e^{at})\\\frac{\theta_m}{\theta}(e^{at}-1)&\frac{\theta_m}{\theta}(1-e^{at}) + a\end{pmatrix}.
    \end{align*}
    Then for $\mathbf{C}e^{\mathbf{A}(t-\xi)}\mathbf{B}(\xi)$ we have
    \begin{align*}
        &\mathbf{C}e^{\mathbf{A}(t-\xi)}\mathbf{B}(\xi)=\frac{e^{-\eta(t-\xi)}}{a}\Bigg[- \frac{g(t-\xi)}{\theta }e^{-\eta (t-\xi)} \iota (\xi)\nonumber\\
        &(g(t-\xi)\epsilon_0-\kappa_0a)\|(\hat{\alpha}(\cdot,\xi),\hat{\beta}(\cdot,\xi))^T\|^2\nonumber\\
        &+ (g(t-\xi)\epsilon_1-\kappa_1a) \hat{\alpha}^2(\ell,\xi) + (g(t-\xi)\epsilon_2-\kappa_2a)\tilde{\beta}^2(0,\xi)\Bigg],
    \end{align*}
    where the increasing function $g(t)>0$ is given by 
    \begin{align*}
        g(t)=(e^{at}(\theta_m+a\theta)-\theta_m).
    \end{align*}
    Now, for $nh\leq\xi\leq t\leq (n+1)h$, using \eqref{eqkappa_i} and \eqref{tau}, $g(t-\xi)\epsilon_i-\kappa_ia$ for $i=0,1,2$ can be shown to satisfy
    \begin{align*}
        g(t-&\xi)\epsilon_i-\kappa_ia\leq g(h)\epsilon_i-\kappa_ia,\\
        \leq&\epsilon_i\left[e^{ah}\left(\theta_m+a\theta\right)-\left(\theta_m+\frac{a\theta}{1-\sigma}\right)\right],\\
        \leq&\epsilon_i(\theta_m+\theta a)(e^{ah}-e^{a\tau}).
    \end{align*}
   Since $h\leq \tau$, $(e^{ah}-e^{a\tau})\leq 0$. Thus, it can be seen that $g(t-\xi)\epsilon_i-\kappa_ia\leq  0$ for $i=0,1,2$. Therefore, we have that $\mathbf{C}e^{\mathbf{A}(t-\xi)}\mathbf{B}(\xi) \leq 0$ leading to the following inequality:
    \begin{align*}
        &\Gamma^c(t)\leq \mathbf{C}e^{\mathbf{A}(t-nh)}\mathbf{z}(nh)\\
        &\leq \frac{e^{-\eta(t-nh)}}{a}\Bigg[\left(\frac{\theta_m}{\theta}(e^{a(t-nh)}-1)+ae^{a(t-nh)}\right)\Gamma^c(nh)\nonumber\\
        &+\left(a+\frac{\theta_m}{\theta}\right)(1-e^{a(t-nh)})m(nh)\Bigg].
    \end{align*}
    Substituting for $\Gamma^c(nh)$ using \eqref{gamma_c}, we obtain \eqref{gamma_c petc}, which concludes the proof. \hfill 
\end{proof}

Finally, inspired by the relation \eqref{gamma_c petc}, let us define the periodic event-triggering function $\Gamma^p(t)$ as in \eqref{gamma_p}.

\subsection{Convergence of the closed-loop system under PETC} 
Using the PETC triggering rule \eqref{tpk},\eqref{gamma_p}, we establish the global exponential convergence of the observer-based PETC closed-loop system \eqref{sys_u}-\eqref{sys_BC1},\eqref{sys_BC2_d},\eqref{obs_u}-\eqref{obs_BC1},\eqref{obs_BC2_d} with the control input $U_k^p(t)$ given by \eqref{U_d}.
\begin{lemm}\label{lem:pect_gammac_m} Under the PETC triggering rule \eqref{tpk},\eqref{gamma_p}  with a sample period $h$ chosen as in \eqref{hsmpl}, the CETC triggering function $\Gamma^c(t)$ given by \eqref{gamma_c} and the dynamic variable $m(t)$ governed by \eqref{m} satisfy $\Gamma^c(t)\leq 0$ and $m(t)<0$ for all $t\in \mathbb{R}^+$.    
\end{lemm}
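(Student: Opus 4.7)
The plan is to proceed by induction on the event index $k\in\mathbb{N}$, showing that $\Gamma^c(t)\le 0$ and $m(t)<0$ hold on each inter-event interval $[t_k^p,t_{k+1}^p)$ and then patch the intervals together. A key preliminary observation is that every event time is a sampling instant and no event can fire strictly between two consecutive sampling instants, so the minimum inter-event time is at least $h$, which via Corollary \ref{cor:existece} rules out any Zeno-type accumulation and guarantees that the solution is defined on all of $\mathbb{R}^+$. The base case at $t_0^p=0$ is immediate: since the control has just been refreshed, $d(0)=0$ and hence $\Gamma^c(0)=m(0)=m^0<0$.

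For the inductive step, assume $m(t_k^p)<0$. Since no event is triggered on $(t_k^p,t_{k+1}^p)$, the PETC rule \eqref{tpk} forces $\Gamma^p(nh)\le 0$ at every sampling instant $nh\in[t_k^p,t_{k+1}^p)$; at $t_k^p$ itself this is immediate from $d(t_k^p)=0$ and the induction hypothesis $m(t_k^p)<0$. Fix such an $nh$ and any $t\in[nh,(n+1)h)$. Proposition \ref{prop:gamma_c_eq} yields
\begin{align*}
    \Gamma^c(t) \le \frac{e^{-\eta(t-nh)}}{a}\bigl[(e^{a(t-nh)}(\theta_m+a\theta)-\theta_m)\,d^2(nh)+a\,m(nh)\bigr].
\end{align*}
Because $a=1+\epsilon_3+\eta>0$, the map $s\mapsto e^{as}(\theta_m+a\theta)-\theta_m$ is strictly increasing on $[0,h]$ with nonnegative values, and $d^2(nh)\ge 0$, so replacing $t-nh$ by $h$ in the bracket produces an upper bound that coincides precisely with $\Gamma^p(nh)$. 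Hence $\Gamma^c(t)\le \tfrac{e^{-\eta(t-nh)}}{a}\Gamma^p(nh)\le 0$ on $[nh,(n+1)h)$, and sweeping over all sampling instants in $[t_k^p,t_{k+1}^p)$ gives $\Gamma^c(t)\le 0$ throughout this interval.

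With $\Gamma^c(t)\le 0$ established, the inequality $\theta d^2(t)\le -m(t)$ plugged into \eqref{m} reproduces the differential inequality used in the proof of Lemma \ref{lem:m<0}, namely
\begin{align*}
    \dot m(t)\le -\Bigl(\eta+\tfrac{\theta_m}{\theta}\Bigr)m(t)-\kappa_0\|(\hat{\alpha}(\cdot,t),\hat{\beta}(\cdot,t))^T\|^2-\kappa_1\hat{\alpha}^2(\ell,t)-\kappa_2\tilde{\beta}^2(0,t).
\end{align*}
The comparison principle with $m(t_k^p)<0$ delivers $m(t)<0$ on $(t_k^p,t_{k+1}^p)$, and the convention $m(t_{k+1}^{p-})=m(t_{k+1}^p)=m(t_{k+1}^{p+})$ carries strict negativity to the next event instant, closing the induction and yielding the conclusion for all $t\in\mathbb{R}^+$.

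The main obstacle is the monotonicity argument that converts the continuous-in-$t$ bound of Proposition \ref{prop:gamma_c_eq} into a condition on the discrete quantity $\Gamma^p(nh)$. This is precisely where the form of $\Gamma^p$ in \eqref{gamma_p}, with coefficient $e^{ah}(\theta_m+a\theta)-\theta_m$, and the sampling restriction $h\le\tau$ play their role: without the upward monotonicity of $s\mapsto e^{as}(\theta_m+a\theta)-\theta_m$ on $[0,h]$ one could not pass the supremum inside the bracket and turn a per-sample non-positivity check into a between-sample guarantee. Once this step is in place, the rest is a direct transcription of the CETC analysis developed for Lemma \ref{lem:m<0}.
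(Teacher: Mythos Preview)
Your proof is correct and follows essentially the same approach as the paper: induct over event intervals, use Proposition~\ref{prop:gamma_c_eq} together with the monotonicity of $s\mapsto e^{as}(\theta_m+a\theta)-\theta_m$ to obtain $\Gamma^c(t)\le \tfrac{e^{-\eta(t-nh)}}{a}\Gamma^p(nh)\le 0$ on each sub-interval $[nh,(n+1)h)$, and then invoke the argument of Lemma~\ref{lem:m<0} to propagate $m(t)<0$. One minor remark: in your closing paragraph you say the restriction $h\le\tau$ plays its role in the monotonicity step, but that restriction is actually consumed inside the proof of Proposition~\ref{prop:gamma_c_eq} (to make the forcing term non-positive); the monotonicity step itself only uses $a>0$.
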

\begin{proof} 
        Assume that two successive events are triggered at $t=t_k^p$ and $t=t_{k+1}^p$, $k\in\mathbb{N}$ under the PETC triggering rule \eqref{tpk},\eqref{gamma_p} with a sampling period of $h$ and $m(t_k^p)<0$. Since $d(t_k^p)=0$, we have from \eqref{gamma_c} that $\Gamma^c(t_k^p)=m(t_k^p)<0$ and from \eqref{gamma_p} that $\Gamma^p(t_k^p)=am(t_k^p)<0$. Due to the periodic nature of the triggering mechanism, we evaluate $\Gamma^p(t)$ at each $t=nh,~n\in [t_k^p/h,t_{k+1}^p/h)\cap \mathbb{N}$ and events are triggered only if $\Gamma^p(nh)>0$. Consider the inequality \eqref{gamma_c petc} when $t\in[nh,(n+1)h)$. Since $e^{a(t-nh)}$ is an increasing function of t, we have
    \begin{align*}
        \Gamma^c(t)\leq&  \frac{e^{-\eta(t-nh)}}{a}\bigg[\left(e^{ah}\left(\theta_m+a\theta\right)-\theta_m\right)d^2(nh)\nonumber\\
        &\qquad+am(nh)\bigg],\\
        \leq&\frac{e^{-\eta(t-nh)}}{a}\Gamma^p(nh). 
    \end{align*} 
    Therefore,  $\Gamma^p(nh)\leq 0$ implies that $\Gamma^c(t)\leq 0$ for all $t\in[nh,(n+1)h)$. Hence, if $\Gamma^p(t)\leq 0$ at a certain time, $\Gamma^c(t)$ remains non-positive at least until the next evaluation of the triggering function. Since the next time at which $\Gamma^p(t)>0$ is at $t=t_{k+1}^p$, we know that $\Gamma^c(t)\leq 0$ at least until $t=t_{k+1}^{p^-}$. Therefore, from Lemma~\ref{lem:m<0}, $m(t)<0$ for $t\in[t_k^p,t_{k+1}^p)$ and by definition, $m(t_{k+1}^{p^-})=m(t_{k+1}^p)=m(t_{k+1}^{p^+})$, leading to $m(t_{k+1}^p)<0$. Since an event is triggered at $t=t_{k+1}^p$, $d(t_{k+1}^p)=0$, resulting in $\Gamma^c(t_{k+1}^p)=m(t_{k+1}^p)<0$, $\Gamma^p(t_{k+1}^p)=am(t_{k+1}^p)<0$. Applying this reasoning for all intervals in $I^p$ and noting that $m^0<0$, we can conclude that $\Gamma^c(t)<0$ and $m(t)<0$ for all $t\in\mathbb{R}^+$. \hfill 
\end{proof}

Subsequently, we establish the global exponential convergence of the closed-loop system in the following theorem.
\begin{thm}
Subject to Assumption \ref{assum:reflection}, let $\eta,\theta>0$, $\sigma\in(0,1)$ and the parameters $a$, $\theta_m$ be determined as in \eqref{aaaa},\eqref{thetam}  respectively. Then, under the PETC triggering rule \eqref{tpk},\eqref{gamma_p} with the sampling period $h>0$ chosen as in \eqref{hsmpl}, the observer-based PETC closed-loop system \eqref{sys_u}-\eqref{sys_BC1},\eqref{obs_u}-\eqref{obs_BC1},\eqref{U_d},\eqref{sys_BC2_d},\eqref{obs_BC2_d} has a unique solution $(u,v,\hat{u}, \hat{v})^T \in \mathcal{C}^0(\mathbb{R}^+; L^2((0,\ell);\mathbb{R}^4))$, and the closed-loop system states globally exponentially converge to $0$ in the spatial $L^2$ norm.
\end{thm}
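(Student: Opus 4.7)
The plan is to reduce this theorem to already-established results by exploiting the fact that Lemma \ref{lem:pect_gammac_m} guarantees, under PETC, the same invariants that the CETC rule enforces by construction, namely $\Gamma^c(t)\leq 0$ and $m(t)<0$ for all $t\in\mathbb{R}^+$. The proof then splits into (i) well-posedness and (ii) Lyapunov-based exponential decay, each of which will reuse machinery already developed in the excerpt rather than introducing new estimates.

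For well-posedness, I would observe that the PETC triggering rule \eqref{tpk} restricts triggering instants to the grid $\{nh:n\in\mathbb{N}\}$ with strict increase, so any two consecutive events are separated by at least the sampling period $h>0$. This supplies the uniform lower bound $\tau_0=h>0$ on inter-event intervals required by Corollary \ref{cor:existece}, which then yields a unique solution $(u,v,\hat{u},\hat{v})^T\in\mathcal{C}^0(\mathbb{R}^+;L^2((0,\ell);\mathbb{R}^4))$ to the closed-loop PETC system. No additional well-posedness work is needed.

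For exponential convergence, I would reuse the Lyapunov function $W(t)=V_1(t)+V_2(t)-m(t)$ of Proposition \ref{propo:lyapunov stability} along with the same selections of $A$, $B$, $C$, $D$, $\mu$, $\delta$ dictated by \eqref{thetam}--\eqref{r} and \eqref{eqkappa_i}. The only place in that proof where the CETC rule is actually invoked is the bound $\theta d^2(t)\leq -m(t)$ coming from $\Gamma^c(t)\leq 0$; Lemma \ref{lem:pect_gammac_m} delivers precisely this inequality along the PETC trajectory. Consequently the same sequence of manipulations — differentiating $V_1+V_2$ along \eqref{err_target_u}--\eqref{err_target_BC2} and \eqref{obs_target_u}--\eqref{obs_target_BC1},\eqref{obs_target_BC2_d}, using Young's inequality on the cross terms involving $\tilde{\beta}(0,t)$ and $d(t)$, and invoking \eqref{m} together with \eqref{thetam} — transfers verbatim and produces $\dot{W}(t)\leq -v^{*}W(t)$ with $v^{*}=\min\{\mu-\delta-\kappa_0/\hat{r}_{v1},\eta\}$. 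The comparison principle then gives $W(t)\leq W(0)e^{-v^{*}t}$, and since $-m(t)>0$ this controls $V_1(t)+V_2(t)$; the norm equivalences \eqref{v1_bounds}, \eqref{v2_bounds} yield exponential decay of the target states in $L^2$, and the bounded invertibility of \eqref{err_backstepping1}, \eqref{err_backstepping2}, \eqref{err_backstepping1-inverse}, \eqref{err_backstepping2-inverse}, \eqref{obs_backstepping_1}, \eqref{obs_backstepping_2}, \eqref{backstepping_1-inverse}, \eqref{backstepping_2-inverse} transfers this decay to $(u,v,\hat{u},\hat{v})$.

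The main obstacle is conceptual rather than computational: one has to be certain that every structural feature the CETC Lyapunov analysis relies on — the sign condition $m(t)<0$, the continuity of $m(t)$ across events by the convention $m(t_k^{p^-})=m(t_k^p)=m(t_k^{p^+})$, and the triggering-induced upper bound on $\theta d^2(t)$ — actually survives the switch from continuous to periodic evaluation. That survival has been secured already by Lemma \ref{lem:pect_gammac_m}, which in turn rests on the careful construction of $\Gamma^p(t)$ via Proposition \ref{prop:gamma_c_eq}. Once this reduction is recognized, the theorem follows by invoking Corollary \ref{cor:existece} and replaying the Lyapunov calculation of Proposition \ref{propo:lyapunov stability} without any new estimates.
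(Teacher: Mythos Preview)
Your proposal is correct and matches the paper's proof essentially line for line: the paper also invokes Corollary~\ref{cor:existece} together with the built-in minimum dwell-time $h$ for well-posedness, then cites Lemma~\ref{lem:pect_gammac_m} to obtain $\Gamma^c(t)\leq 0$ and $m(t)<0$ along the PETC solution, and finally refers back to the Lyapunov argument of Proposition~\ref{propo:lyapunov stability} verbatim. One minor sharpening: in that Lyapunov calculation the $d^2(t)$ contribution from $\hat\beta(\ell,t)=\rho\hat\alpha(\ell,t)+d(t)$ is cancelled exactly by the choice $\theta_m=2De^{\mu\ell/\lambda_2}$ in $-\dot m$, so what is really needed from Lemma~\ref{lem:pect_gammac_m} is $m(t)<0$ (to ensure $W>0$ and $V_1+V_2\leq W$), which is indeed a consequence of $\Gamma^c(t)\leq 0$ as you note.
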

\begin{proof}
    Using Corollary~\ref{cor:existece} and noting that the triggering function is evaluated periodically with a period $h$, which makes the closed-loop system inherently Zeno-free, we can conclude that the closed-loop system \eqref{sys_u}-\eqref{sys_BC1},\eqref{obs_u}-\eqref{obs_BC1},\eqref{U_d},\eqref{sys_BC2_d},\eqref{obs_BC2_d} has a unique solution $(u,v,\hat{u}, \hat{v})^T \in \mathcal{C}^0(\mathbb{R}^+; L^2((0,\ell);\mathbb{R}^4))$. Following Lemma~\ref{lem:pect_gammac_m}, we know that $\Gamma^c(t)\leq 0$ and $m(t)<0$ for all $t\in\mathbb{R}^+$ along the PETC closed-loop solution. Therefore, using the same arguments used in Proposition~\ref{propo:lyapunov stability}, we can conclude that the closed-loop system \eqref{sys_u}-\eqref{sys_BC1},\eqref{obs_u}-\eqref{obs_BC1},\eqref{U_d},\eqref{sys_BC2_d},\eqref{obs_BC2_d}, globally exponentially converges to $0$ in the spatial $L^2$ norm. \hfill 
\end{proof}

\section{Self-triggered control (STC)}\label{sec:stc}
In this section, we propose an observer-based STC approach. Unlike the previously proposed CETC and PETC methods, which require evaluating a triggering function to update the control input, the STC approach determines the next event time at the current event time. It does so by using continuously available measurements and predicting an upper bound of the closed-loop system state. Consequently, STC proactively determines event times, whereas both CETC and PETC reactively determine event times after a certain condition is met.  

Before proceeding with the design, we assume the following regarding the initial data.
\begin{assumption}\label{assum:initial}
    The initial conditions of the observer error system given by \eqref{err_u}-\eqref{err_BC2} satisfy
    \begin{align*}
        \tilde{u}^2(x,0)\leq \phi_u,\\
        \tilde{v}^2(x,0)\leq \phi_v,
    \end{align*}
for all $x\in[0,\ell]$  for some known arbitrary constants $\phi_u, \phi_v>0$. 

\end{assumption}

The increasing sequence of event times $I^s=\{t_k^s\}_{k\in\mathbb{N}}$ at which the control input is updated is determined by the following rule. 
\begin{definition}\label{defn:triggering_stc}
 The increasing sequence of event times $I^s=\{t_k^s\}_{k\in\mathbb{N}}$ with $t_0^s=0$ is determined via to the following rule:
    \begin{align}
        t^s_{k+1}=&t^s_k+G(t_k^s),\label{stc}\\
        \intertext{where}
        G(t)=&\max\{\tau,\bar G(t)\},\label{G}\\
        \bar G(t)=&  \frac{1}{\varrho + \eta }\ln\left(\frac{\theta_m\mathcal{F}(t)-m(t)(\varrho+\eta)}{\mathcal{F}(t)(\theta(\varrho+\eta)+\theta_m)}\right),\label{Gbar}\\
        \mathcal{F}(t)=&r_d\left(2\bar{V}_2(t)+\frac{2r_d\bar{D}e^{\bar{\mu}\frac{\ell}{\lambda_2}}\bar{V}_2(t)+\phi(t)}{\varrho}\right)   ,\label{F}\\
        \bar{V}_2(t)=&\int_0^\ell \left(\frac{\bar{C}}{\lambda_1}\hat{\alpha}^2(x,t)e^{-\bar{\mu}\frac{x}{\lambda_1}} + \frac{\bar{D}}{\lambda_2}e^{\bar{\mu}\frac{x}{\lambda_2}}\hat{\beta}^2(x,t)\right)dx,\label{v2_bar}\\
        \phi(t)=&(2\bar{C}q^2+\bar{P}_{V_2})\phi_0(t),\quad\bar{C}=1,\quad\bar{D}=2q^2\label{phi,Cbar,Dbar}\\
        \bar{\mu}=&
        \frac{2\lambda_1\lambda_2}{\ell(\lambda_1+\lambda_2)}\ln\left(\frac{1}{2|q\rho|}\right),\label{mubar}\\
        \phi_0(t)=&\begin{cases}
            \max\{\rho^2\phi_\alpha,\phi_\beta\}&t\leq\frac{\ell}{\lambda_1}+\frac{\ell}{\lambda_2}\\
            0&t>\frac{\ell}{\lambda_1}+\frac{\ell}{\lambda_2}\\
        \end{cases}\label{phi0}\\
        \bar{P}_{V_2}=&\bar{\delta}\int_0^\ell\left(\frac{\bar{C}}{\lambda_1}e^{-\bar{\mu}\frac{x}{\lambda_1}}\bar{p}_1^2(x)+\frac{\bar{D}}{\lambda_2}e^{\bar{\mu}\frac{x}{\lambda_2}}\bar{p}_2^2(x)\right)dx,\label{pv2}\\
        \varrho=&\bar{\delta}-\bar{\mu} + 2\bar{D}e^{\bar{\mu}\frac{\ell}{\lambda_2}},\label{varrho}\\
        r_d=&\frac{4\max\left\{\int_0^\ell(N^\alpha(\xi))^2d\xi,\int_0^\ell(N^\beta(\xi))^2d\xi\right\}}{\min\left\{\frac{\bar{C}e^{-\bar{\mu}\frac{\ell}{\lambda_1}}}{\lambda_1},\frac{\bar{D}}{\lambda_2}\right\}},\label{rd} 
    \end{align}
   with $\bar\delta>0$ chosen such that $\varrho>0$ and $\phi_\alpha$ and $\phi_\beta$ being known constants. Let $\eta,\theta>0$  be arbitrary parameters. Furthermore $\theta_m$ is given by \eqref{thetam}. The dynamic variable $m(t)$ evolves according to \eqref{m} with $m(0)=m^0<0,~m(t_k^{s^-})=m(t_k^s)=m(t_k^{s^+})~k\in\mathbb{N}$. 
\end{definition}

\subsection{Design of the positive function $G(t)$}

The function $G(t)$ is determined such that updating control inputs at events generated by the STC triggering rule \eqref{stc}-\eqref{rd}  ensures the CETC triggering function $\Gamma^c(t)$ given by \eqref{gamma_c} satisfies $\Gamma^c(t)\leq 0$ for all $t\in\mathbb{R}^+$. Towards this, we first derive upper bounds for the variables $d^2(t)$ and $m(t)$ between two consecutive event times $t_k^s$ and $t_{k+1}^s$.
\begin{lemm}\label{lem:d2m eq}
    For $d(t)$ given by \eqref{d}, and for $m(t)$ satisfying \eqref{m} with $m(0)=m^0<0,~m(t_k^{s^-})=m(t_k^s)=m(t_k^{s^+})~k\in\mathbb{N}$, the following inequalities hold:
    \begin{align}
        d^2(t)<& \mathcal{F}(t_k^s)e^{\varrho(t-t_k^s)},\label{d2}\\
        m(t)<& e^{-\eta(t-t_k^s)}m(t_k^s) \nonumber\\
        &+\frac{\theta_m\mathcal{F}(t_k^s)e^{-\eta(t-t_k^s)}}{\varrho+\eta}\left(e^{(\varrho+\eta)(t-t_k^s)}-1\right),\label{meq}
    \end{align}
    for all $t\in[t_k^s,t_{k+1}^s),k\in\mathbb{N}$, with $\varrho>0$ given by \eqref{varrho}. 
\end{lemm}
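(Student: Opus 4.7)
The proof splits cleanly into two parts. The bound \eqref{meq} on $m(t)$ is almost immediate once \eqref{d2} is in hand: starting from the ODE \eqref{m} and discarding the three non-positive terms $-\kappa_0\|(\hat\alpha,\hat\beta)^T\|^2$, $-\kappa_1\hat\alpha^2(\ell,t)$, and $-\kappa_2\tilde\beta^2(0,t)$, one gets $\dot m(t)\leq -\eta m(t)+\theta_m d^2(t)$. Inserting \eqref{d2} into this scalar inequality and applying the variation-of-constants formula on $[t_k^s,t]$, the remaining convolution $\int_{t_k^s}^t e^{-\eta(t-s)}e^{\varrho(s-t_k^s)}\,ds$ evaluates to $e^{-\eta(t-t_k^s)}(e^{(\varrho+\eta)(t-t_k^s)}-1)/(\varrho+\eta)$, which reproduces \eqref{meq} verbatim. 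So all the work is in establishing \eqref{d2}.

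For \eqref{d2}, note that at $t=t_k^s$ the strict inequality holds because $d(t_k^s)=0$ while $\mathcal F(t_k^s)>0$ by construction. I would extend the strict bound to $(t_k^s,t_{k+1}^s)$ by assembling three ingredients. First, a finite-time extinction estimate for the observer error, $\tilde\beta^2(0,t)\leq \phi_0(t)$, obtained from Assumption~\ref{assum:initial} together with the bounded invertibility of the observer-error backstepping transformations \eqref{err_backstepping1},\eqref{err_backstepping2} (yielding pointwise bounds $\phi_\alpha,\phi_\beta$ on $\tilde\alpha^2(x,0)$ and $\tilde\beta^2(x,0)$) and the method of characteristics applied to the pure transport target system \eqref{err_target_u}--\eqref{err_target_BC2}, whose solution vanishes identically for $t>\ell/\lambda_1+\ell/\lambda_2$. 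Second, a Lyapunov inequality for $\bar V_2$: differentiating \eqref{v2_bar} along \eqref{obs_target_u}--\eqref{obs_target_BC1},\eqref{obs_target_BC2_d}, integrating by parts, using the choice of $\bar\mu$ in \eqref{mubar} together with $\bar C=1,\ \bar D=2q^2$ to absorb the boundary contributions at $x=\ell$ and $x=0$ (exactly the cancellation exploited in the proof of Proposition~\ref{propo:lyapunov stability}), applying Young's inequality with parameter $\bar\delta$ to the source generated by $\bar p_1,\bar p_2$, and then using the first ingredient, gives
\begin{align*}
\dot{\bar V}_2(t)\leq (\bar\delta-\bar\mu)\bar V_2(t)+2\bar D e^{\bar\mu\ell/\lambda_2}d^2(t)+\phi(t),
\end{align*}
with $\phi(t)$ as in \eqref{phi,Cbar,Dbar},\eqref{pv2}. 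Third, a pointwise bound of $d^2$ by $\bar V_2$: since $d(t)=U(t_k^s)-U(t)$ with $U$ given by \eqref{cntns_fb}, Cauchy--Schwarz together with the norm equivalence $\bar V_2(t)\geq C^\ast\|(\hat\alpha,\hat\beta)^T\|^2$, $C^\ast=\min\{\bar C e^{-\bar\mu\ell/\lambda_1}/\lambda_1,\bar D/\lambda_2\}$, yields $d^2(t)\leq r_d\bigl(\bar V_2(t)+\bar V_2(t_k^s)\bigr)$ with $r_d$ as in \eqref{rd}.

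To close the loop I would argue by contradiction. Let $T^\ast=\sup\{t\in[t_k^s,t_{k+1}^s):d^2(s)<\mathcal F(t_k^s)e^{\varrho(s-t_k^s)} \text{ for all } s\in[t_k^s,t]\}$; if $T^\ast<t_{k+1}^s$, substitute the third ingredient into the second, integrate the resulting scalar differential inequality for $\bar V_2$ from $t_k^s$ to $T^\ast$, and re-substitute into the third ingredient. The grouping $\varrho=\bar\delta-\bar\mu+2\bar D e^{\bar\mu\ell/\lambda_2}$ emerges naturally as the exponential rate, and the nested expression \eqref{F} for $\mathcal F(t_k^s)$ decomposes into the initial contribution $2r_d\bar V_2(t_k^s)$, the $d^2$-feedback contribution $2r_d\bar D e^{\bar\mu\ell/\lambda_2}\bar V_2(t_k^s)/\varrho$, and the forcing contribution $\phi(t_k^s)/\varrho$; these three together contradict the equality $d^2(T^\ast)=\mathcal F(t_k^s)e^{\varrho(T^\ast-t_k^s)}$ that would hold at $T^\ast$. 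The hard part is exactly this bookkeeping: matching the constants in \eqref{F},\eqref{rd},\eqref{varrho} requires the specific choices $\bar C=1$, $\bar D=2q^2$, $\bar\mu$ as in \eqref{mubar}, and a Young's parameter $\bar\delta>0$ tuned so that $\varrho>0$; this is why \eqref{F} is presented in its somewhat unusual nested form rather than as a single closed expression.
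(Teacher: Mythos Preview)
Your proposal is correct and matches the paper's proof in all essential respects: the same three ingredients (the pointwise bound $\tilde\beta^2(0,t)\le\phi_0(t)$ via characteristics on the error target system, the Lyapunov inequality for $\bar V_2$, and the Cauchy--Schwarz bound $d^2\le r_d(\bar V_2(t)+\bar V_2(t_k^s))$), the same substitution of the third ingredient into the second to close a scalar differential inequality for $\bar V_2$, and the same variation-of-constants argument for $m$. The only difference is that the paper proceeds directly by the comparison principle rather than by contradiction: since the bound $d^2(t)\le r_d(\bar V_2(t)+\bar V_2(t_k^s))$ holds unconditionally (it does not rely on \eqref{d2} already holding), your $T^\ast$-bootstrap is unnecessary---one can substitute, integrate, and resubstitute immediately. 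A minor citation slip: the pointwise bounds $\phi_\alpha,\phi_\beta$ on the target initial data come from the \emph{inverse} transformations \eqref{err_backstepping1-inverse},\eqref{err_backstepping2-inverse}, not \eqref{err_backstepping1},\eqref{err_backstepping2}.
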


\begin{proof}
     Differentiating the function \eqref{v2_bar} with respect to time for $t\in[t_k^s,t_{k+1}^s),~k\in\mathbb{N}$ and using \eqref{obs_target_u}-\eqref{obs_target_BC1},\eqref{obs_target_BC2_d}, the following expression can be obtained:
    \begin{align}
       \hspace{-0.08cm} \dot{\bar{V}}_2(t)=&2\tilde{\beta}(0,t)\int_0^\ell\left(\frac{\bar{C}}{\lambda_1}\bar{p}_1(x)e^{-\bar{\mu}\frac{x}{\lambda_1}}+\frac{\bar{D}}{\lambda_2}\bar{p}_2(x)e^{\bar{\mu}\frac{x}{\lambda_2}}\right)dx\nonumber\\
        &-\bar{\mu}\bar{V}_2(t)-\bar{C}\left(e^{-\bar{\mu}\frac{\ell}{\lambda_1}}\hat{\alpha}^2(\ell,t)-\hat{\alpha}^2(0,t)\right)\nonumber\\
        &+\bar{D}\left(e^{\bar{\mu}\frac{\ell}{\lambda_2}}(\rho\hat{\alpha}(\ell,t)+d(t))^2-\hat{\beta}^2(0,t)\right).
    \end{align}
 Then, applying  Young's inequality, we can obtain the following where $\bar{\delta}>0$ and $\bar{P}_{V_2}$ is given in \eqref{pv2}:
    \begin{align}
        \dot{\bar{V}}_2(t)&\leq(\bar{\delta}-\bar{\mu})\bar{V}_2(t)+2\bar{D}e^{\bar{\mu}\frac{\ell}{\lambda_2}}d^2(t)\nonumber\\
        &+\left(-\bar{C}e^{-\bar{\mu}\frac{\ell}{\lambda_2}}+2\rho^2\bar{D}e^{\bar{\mu}\frac{\ell}{\lambda_1}}\right)\hat{\alpha}^2(\ell,t)+\tilde{\beta}^2(0,t)\bar{P}_{V_2}\nonumber\\
        &+\left(2\bar{C}q^2-\bar{D}\right)\hat{\beta}^2(0,t)+2\bar{C}q^2\tilde{\beta}^2(0,t).
    \end{align}
    Selecting $\bar{\mu}$ as defined in \eqref{mubar} such that $1-4\rho^2q^2e^{\bar{\mu}\left(\frac{\ell}{\lambda_1}+\frac{\ell}{\lambda_2}\right)}=0$ along with the parameters $\bar{C}$, $\bar{D}$ as given in \eqref{phi,Cbar,Dbar}, we obtain the following inequality:
    \begin{align}
        \dot{\bar{V}}_2(t)\leq&(\bar{\delta}-\bar{\mu})\bar{V}_2(t)+2\bar{D}e^{\bar{\mu}\frac{\ell}{\lambda_2}}d^2(t) + \tilde{\beta}^2(0,t)(2\bar{C}q+\bar{P}_{V_2}).\label{vdot_bar 1}
    \end{align}
    To obtain an upper bound for the solution of $\bar{V}_2(t)$ using \eqref{vdot_bar 1}, we derive the following inequalities for $d^2(t)$ and $\tilde{\beta}^2(0,t)$. 
    
    Consider $d(t)$ given by \eqref{d}. Using Young's inequality and Cauchy–Schwarz inequality, the following approximation can be obtained for $d^2(t)$:
    \begin{align}
        d^2(t)\leq& 4\max\left\{\int_0^\ell(N^\alpha(\xi))^2d\xi,\int_0^\ell(N^\beta(\xi))^2d\xi\right\}\nonumber\\
        &\times\big(\|(\hat{\alpha}(\cdot,t),\hat{\beta}(\cdot,t))^T\|^2+\|(\hat{\alpha}(\cdot,t_k^s),\hat{\beta}(\cdot,t_k^s))^T\|^2\big),
    \end{align}
    Noting that 
    \begin{align}
        &\min\left\{\frac{\bar{C}e^{-\bar{\mu}\frac{\ell}{\lambda_1}}}{\lambda_1},\frac{\bar{D}}{\lambda_2}\right\}\|(\hat{\alpha}(\cdot,t),\hat{\beta}(\cdot,t))^T\|^2\leq \bar{V}_2(t),
    \end{align}
    the following holds for $d^2(t)$:
    \begin{align}
        d^2(t)\leq r_d (\bar{V}(t^s_k)+\bar{V}(t))\label{d2_v},
    \end{align}
    where $r_d$ is given in \eqref{rd}.
    Additionally considering the dynamics of the target system \eqref{err_target_u}-\eqref{err_target_BC2}, the characteristic solution of the system can be obtained as
    \begin{align}
        \tilde{\alpha}(x,t)=&
        \begin{cases}
            0&x\leq \lambda_1t\\
            \tilde{\alpha}(x-\lambda_1t,0)&x>\lambda_1t
        \end{cases},\label{tildealpha sol}\\
        \tilde{\beta}(x,t)=&
        \begin{cases}
            \tilde{\beta}(x+\lambda_2t,0)&x\leq \ell-\lambda_2t\\
            \rho\tilde{\alpha}\left(\ell+\frac{\lambda_1}{\lambda_2}(\ell-x)-\lambda_1t,0\right)&x> \ell-\lambda_2t
        \end{cases}.\label{tildebeta sol}
    \end{align}
    From Assumption~\ref{assum:initial}, using the transformations \eqref{err_backstepping1-inverse},\eqref{err_backstepping2-inverse}, Young's inequality, and Cauchy–Schwarz inequality, the following inequality can be obtained:
    \begin{align*}
        \tilde{\alpha}^2(x,0)\leq \phi_\alpha,\\
        \tilde{\beta}^2(x,0)\leq \phi_\beta,
    \end{align*}
    where $\phi_\alpha$ and $\phi_\beta$ are given by
    \begin{align}
        \phi_\alpha=& 3\max_{x\in[0,\ell]}\Bigg\{\phi_u + \phi_ux\int_0^x(R^{uu}(x,\xi))^2\d\xi\nonumber\\
        & +\phi_vx\int_0^x(R^{uv}(x,\xi))^2d\xi\Bigg\},\\
        \phi_\beta=& 3\max_{x\in[0,\ell]}\Bigg\{\phi_v + \phi_ux\int_0^x(R^{vu}(x,\xi))^2\d\xi\nonumber\\&+\phi_vx\int_0^x(R^{vv}(x,\xi))^2d\xi\Bigg\}.
    \end{align}
    From the solution of $\tilde{\alpha}(x,t)$ and $\tilde{\beta}(x,t)$ given by \eqref{tildealpha sol},\eqref{tildebeta sol}, it is clear that for $t\geq\frac{\ell}{\lambda_1}+\frac{\ell}{\lambda_2}$, $\tilde{\alpha}(x,t)=0$ and $\tilde{\beta}(x,t)=0$. Also, $\tilde{\beta}^2(0,t)\leq\max\{\rho^2\phi_\alpha,\phi_\beta\}$  for $t<\frac{\ell}{\lambda_1}+\frac{\ell}{\lambda_2}$. Hence $\tilde{\beta}(0,t)$ is bounded for all $ t\geq 0$ as
    \begin{align}
        \tilde{\beta}^2(0,t)\leq \phi_0(t).\label{beta_t,0 eq}
    \end{align}
 Since the $\phi_0(t)$ given by \eqref{phi0} is a non-increasing function of $t$ and since $t\in[t_k^s,t_{k+1}^s)$, \eqref{beta_t,0 eq} still holds for R.H.S at $t=t_k^s$. Substituting $d^2(t)$ from \eqref{d2_v} and $\tilde{\beta}^2(0,t)$ from \eqref{beta_t,0 eq} to \eqref{vdot_bar 1} and  at $\phi_0(t)=\phi_0(t_k^s)$, we obtain the following expression:
    \begin{align*}
        \dot{\bar{V}}_2(t)\leq&\varrho\bar{V}_2(t)+2r_d\bar{D}e^{\bar{\mu}\frac{\ell}{\lambda_2}}\bar{V}_2(t_k^s) + \phi(t_k^s),
    \end{align*}
    where $\varrho$ and $\phi(t)$ are defined in \eqref{varrho} and \eqref{phi,Cbar,Dbar} respectively.
    Using the comparison principle, one gets the following estimate:
    \begin{align}
        \bar{V}_2(t)\leq& \bar{V}_2(t^s_k)e^{\varrho(t-t_k^s)}\nonumber\\
        &+\frac{2r_d\bar{D}e^{\bar{\mu}\frac{\ell}{\lambda_2}}\bar{V}_2(t_k^s) +\phi(t_k^s)}{\varrho} \left(e^{\varrho(t-t_k^s)}-1\right),\nonumber\\
         \bar{V}_2(t)<& \left(\bar{V}_2(t^s_k)+\frac{2r_d\bar{D}e^{\bar{\mu}\frac{\ell}{\lambda_2}}\bar{V}_2(t_k^s) +\phi(t_k^s)}{\varrho} \right)e^{\varrho(t-t_k^s)}.\label{leq bar v2}
    \end{align}
    Using \eqref{d2_v} and \eqref{leq bar v2}, we derive the estimate \eqref{d2}. Next, using \eqref{m} and \eqref{d2}, the following inequality can be derived:
    \begin{align*}
        \dot{m}(t)<-\eta m(t) +\theta_m\mathcal{F}(t_k^s)e^{\varrho(t-t_k^s)}.
    \end{align*} 
Using the comparison principle, the estimate \eqref{meq} can be obtained. \hfill 
\end{proof}

Consider the time period $t\in[t_k^s,t_{k+1}^s),~k\in\mathbb{N}$. Assume that $m(t_k^s)<0$. Since an event is triggered at $t=t_k^s$, we know that $d(t_k^s)=0$, and then, from \eqref{gamma_c}, we know that $\Gamma^c(t_k^s)=m(t_k^s)<0$. Using \eqref{gamma_c},\eqref{d2},and \eqref{meq}, we obtain 
\begin{align}
    &\Gamma^c(t)<\theta \mathcal{F}(t_k^s)e^{\varrho(t-t_k^s)}+  e^{-\eta(t-t_k^s)}m(t_k^s)\nonumber\\
    &+ \frac{\theta_m\mathcal{F}(t_k^s)e^{-\eta(t-t_k^s)}}{\varrho+\eta}\left(e^{(\varrho+\eta)(t-t_k^s)}-1\right).\label{gamma_c_stc}
\end{align} 
R.H.S of \eqref{gamma_c_stc} is an increasing function of $t$. Assume that there exists a time $t_k^{s^*}>t_k^s$ such that the following expression hold:
\begin{align*}
    &\theta \mathcal{F}(t_k^s)e^{\varrho(t_k^{s^*}-t_k^s)}+  e^{-\eta(t_k^{s^*}-t_k^s)}m(t_k^s)\nonumber\\
    &+ \frac{\theta_m\mathcal{F}(t_k^s)e^{-\eta(t_k^{s^*}-t_k^s)}}{\varrho+\eta}\left(e^{(\varrho+\eta)(t_k^{s^*}-t_k^s)}-1\right)=0.
\end{align*}
Solving for $t_k^{s*}$, we get $t_k^{s*}-t_k^s=\bar{G}(t_k^s)$ where $\bar{G}(t)$ is given by \eqref{Gbar}. Since, $t_k^{s*}-t_k^s$ is not guaranteed to be greater than the minimum dwell-time $\tau$, we define $G(t)$ as in \eqref{G}.

\subsection{Convergence of the closed-loop system under STC}
Under the STC triggering rule \eqref{stc}-\eqref{rd}, we establish global exponential convergence of the observer-based system \eqref{sys_u}-\eqref{sys_BC1},\eqref{sys_BC2_d},\eqref{obs_u}-\eqref{obs_BC1},\eqref{obs_BC2_d} with the control input $U_k^s(t)$ given in \eqref{U_d}. Prior to that, we present the following result that is crucial for proving the main result presented in Theorem \ref{thm3}. 
\begin{lemm}\label{lem:stc_gammac_m} Under the STC triggering rule \eqref{stc}-\eqref{rd},  the CETC triggering function $\Gamma^c(t)$ given by \eqref{gamma_c} and the dynamic variable $m(t)$ governed by \eqref{m} satisfy $\Gamma^c(t)\leq 0$ and $m(t)<0$ for all $t\in \mathbb{R}^+$. 
\end{lemm}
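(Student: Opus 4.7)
The plan is to argue by induction on the event index $k$, mirroring the structure of Lemma~\ref{lem:pect_gammac_m} but replacing the periodic-sample inequality with the bound \eqref{gamma_c_stc} obtained via Lemma~\ref{lem:d2m eq}, and supplementing it with the dwell-time estimate from the proof of Lemma~\ref{lem:zeno} to handle the regime where $\bar G(t_k^s)<\tau$.

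For the base case, at $t=t_0^s=0$ an event is triggered, so $d(0)=0$ and \eqref{gamma_c} gives $\Gamma^c(0)=m^0<0$. For the inductive step, suppose $\Gamma^c(t)\leq 0$ on $[0,t_k^s]$ with $m(t_k^s)<0$. Since $d(t_k^s)=0$, the derivation leading to \eqref{gamma_c_stc} supplies an upper bound on $\Gamma^c(t)$ for $t\geq t_k^s$. Viewed as a function of $t-t_k^s$, this bound is strictly increasing (its derivative is manifestly positive since $\mathcal{F}(t_k^s)\geq 0$ and $m(t_k^s)<0$), and by the very definition of $\bar G$ in \eqref{Gbar} its unique zero is located at $t=t_k^s+\bar G(t_k^s)$. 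Consequently, whenever $\bar G(t_k^s)\geq\tau$, the rule \eqref{G} sets $G(t_k^s)=\bar G(t_k^s)$ and we obtain $\Gamma^c(t)\leq 0$ throughout $[t_k^s,t_{k+1}^s]$ directly from the bound.

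In the complementary regime $\bar G(t_k^s)<\tau$ (which includes the case where the bound is already non-negative at $t_k^s$ and $\bar G$ fails to be well-defined), the rule \eqref{G} falls back to $G(t_k^s)=\tau$, and I would invoke the dwell-time argument from the proof of Lemma~\ref{lem:zeno}. That argument constructs the auxiliary function $\psi(t)$ from the evolution of $d^2(t)$ and $m(t)$ alone and shows that $\psi$ cannot reach $1$ in less than $\tau$ time units, starting from any instant at which $d=0$ and $m<0$. Since the ODEs governing $d$ and $m$ between consecutive event times are identical under STC and CETC---the control input $U_k^\omega(t)=U(t_k^\omega)$ being held constant in either regime---this argument transfers verbatim and yields $\Gamma^c(t)\leq 0$ on $[t_k^s,t_k^s+\tau]$. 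Combining the two cases covers the full inter-event interval $[t_k^s,t_{k+1}^s]=[t_k^s,t_k^s+\max\{\tau,\bar G(t_k^s)\}]$.

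With $\Gamma^c(t)\leq 0$ established on $[t_k^s,t_{k+1}^s)$, the proof of Lemma~\ref{lem:m<0} applies without modification: its comparison-principle step only requires $\theta d^2(t)\leq -m(t)$, which is equivalent to $\Gamma^c(t)\leq 0$. This gives $m(t)<0$ on the same interval, and via the convention $m(t_{k+1}^{s^-})=m(t_{k+1}^s)$ together with $d(t_{k+1}^s)=0$, the induction restarts. The main obstacle I anticipate is the Case~2 transfer: one must verify carefully that the proof of Lemma~\ref{lem:zeno} makes no implicit use of the CETC-specific triggering rule beyond the initial conditions $d(t_k^s)=0$ and $m(t_k^s)<0$. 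A quick re-read confirms this, since the $\psi$-dynamics estimate there depends only on \eqref{ddot2}, \eqref{m}, and \eqref{eqkappa_i}, none of which reference the mechanism by which events are scheduled.
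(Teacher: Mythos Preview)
Your proposal is correct and follows essentially the same approach as the paper: induction on $k$, with the case $\bar G(t_k^s)\geq\tau$ handled via the increasing upper bound \eqref{gamma_c_stc} (whose zero sits at $t_k^s+\bar G(t_k^s)$ by construction), the case $\bar G(t_k^s)<\tau$ handled by invoking the dwell-time argument of Lemma~\ref{lem:zeno}, and the propagation of $m(t)<0$ via Lemma~\ref{lem:m<0}. Your explicit verification that the $\psi$-dynamics estimate in Lemma~\ref{lem:zeno} depends only on \eqref{ddot2}, \eqref{m}, \eqref{eqkappa_i} and the reset conditions $d(t_k^s)=0$, $m(t_k^s)<0$---and not on the CETC scheduling itself---is exactly the justification the paper leaves implicit when it simply cites $\tau$ as ``the minimum dwell-time.''
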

\begin{proof}
    Assume that two successive events are triggered at $t=t_k^s$ and $t=t_{k+1}^s,~k\in\mathbb{N}$ under the STC triggering rule \eqref{stc}-\eqref{rd} and $m(t_k^s)<0$. Since $d(t_k^s)=0$, from \eqref{gamma_c}, $\Gamma^c(t_k^s)=m(t_k^s)<0$. According to the definition of the STC triggering rule, we have that $t_{k+1}^s=t_k^s+\tau$ if $\bar{G}(t_k^s)\leq \tau$ or $t_{k+1}^s=t_k^s+\bar{G}(t_k^s)$ if $\bar{G}(t_k^s)>\tau$. Since $\tau$ defined in \eqref{tau} is the minimum dwell-time, if $t_{k+1}^s=t_k^s+\tau$, we know that $\Gamma^c(t)$ remains non positive until $t=t_{k+1}^{s^-}$. Next consider the instance where $t_{k+1}^s=t_k^s+\bar{G}(t_k^s)$. From \eqref{gamma_c_stc}, for $t=t_{k+1}^s$, $\bar{G}(t_k^s)$ is such that
    \begin{align*}
    &\Gamma^c(t_{k+1}^s)<\theta \mathcal{F}(t_k^s)e^{\varrho\bar{G}(t_k^s)}+  e^{-\eta\bar{G}(t_k^s)}m(t_k^s)\nonumber\\
    &+ \frac{\theta_m\mathcal{F}(t_k^s)e^{-\eta(\bar{G}(t_k^s))}}{\varrho+\eta}\left(e^{(\varrho+\eta)(\bar{G}(t_k^s))}-1\right)=0,
    \end{align*}
    therefore, $\Gamma^c(t_{k+1}^s)<0$, and since the upper bound for $\Gamma^c(t)$ in \eqref{gamma_c_stc} is an increasing function of $t$, $\Gamma^c(t)<0$ for all $t\in[t_k^s,t_{k+1}^s)$. Therefore, for the STC condition given in \eqref{stc}-\eqref{rd}, $\Gamma^c(t)$ remains non-positive for $t\in[t_k^s,t_{k+1}^s)$, then similarly from Lemma~\ref{lem:m<0}, we can conclude that $m(t)<0$ for $t\in[t_k^s,t_{k+1}^s)$. By definition, $m(t_{k+1}^{s^-})=m(t_{k+1}^s)=m(t_{k+1}^{s^+})$ leading to $m(t_{k+1}^s)<0$. Since an event is triggered at $t=t_{k+1}^s$, $d(t_{k+1}^s)=0$, resulting in $\Gamma^c(t_{k+1}^s)=m(t_{k+1}^s)<0$. Applying this reasoning for all intervals in $I^s$ and noting that $m^0<0$, we can conclude that $\Gamma^c(t)<0$ and $m(t)<0$ for all $t\in\mathbb{R}^+$. \hfill 
\end{proof}

Note that in light of the results of Lemma~\ref{lem:stc_gammac_m}, a solution for \eqref{Gbar} exist for all $t\in I^s$. Subsequently, we establish the global exponential convergence of the closed-loop system in the following theorem.
\begin{thm}\label{thm3}
Subject to Assumption \ref{assum:reflection}, let $\eta,\theta>0$, $\sigma\in(0,1)$ and the parameters $\tau$, $\theta_m$  be defined as in \eqref{tau},\eqref{thetam} respectively. Then, under the STC triggering rule \eqref{stc}-\eqref{rd}, the observer-based STC closed-loop system \eqref{sys_u}-\eqref{sys_BC1},\eqref{obs_u}-\eqref{obs_BC1},\eqref{U_d},\eqref{sys_BC2_d},\eqref{obs_BC2_d} has a unique solution $(u,v,\hat{u}, \hat{v})^T \in \mathcal{C}^0(\mathbb{R}^+; L^2((0,\ell);\mathbb{R}^4))$, and the closed-loop system states exponentially converge to $0$ in the spatial $L^2$ norm.
\end{thm}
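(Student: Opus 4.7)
The proof plan is to mirror the structure of the PETC convergence theorem, since all the heavy lifting has already been done in Lemma~\ref{lem:stc_gammac_m} and Proposition~\ref{propo:lyapunov stability}. The only new ingredient specific to STC is the Zeno-freeness argument, which in this setting is almost trivial by construction of $G(t)$.

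First, I would address well-posedness and Zeno-freeness. By the definition of the STC triggering rule, $t^s_{k+1} - t^s_k = G(t^s_k) = \max\{\tau,\bar G(t^s_k)\}\geq \tau > 0$, where $\tau > 0$ is the strictly positive minimum dwell-time from Lemma~\ref{lem:zeno}. This gives a uniform positive lower bound on the inter-event times, which (i) precludes Zeno behavior and (ii) allows direct invocation of Corollary~\ref{cor:existece} with $\tau_0 = \tau$ to conclude that the observer-based STC closed-loop system admits a unique solution $(u,v,\hat u,\hat v)^T\in\mathcal{C}^0(\mathbb{R}^+;L^2((0,\ell);\mathbb{R}^4))$.

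Second, I would appeal to Lemma~\ref{lem:stc_gammac_m} to conclude that along the STC closed-loop trajectory, $\Gamma^c(t)\leq 0$ and $m(t)<0$ for all $t\in\mathbb{R}^+$. This means the CETC triggering condition $\theta d^2(t) + m(t)\leq 0$, which was the driving inequality in the Lyapunov analysis of Proposition~\ref{propo:lyapunov stability}, holds for every $t\geq 0$ under STC as well. Consequently I can reuse verbatim the Lyapunov function $W(t)=V_1(t)+V_2(t)-m(t)$ defined in \eqref{w}--\eqref{v2}, with the same constants $\mu,\delta,A,B,C,D,\kappa_0,\kappa_1,\kappa_2,\theta_m$ chosen according to \eqref{thetam}--\eqref{r} and \eqref{eqkappa_i}. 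The derivation of the bound $\dot W(t)\leq -\nu^*W(t)$ carries over unchanged, yielding $W(t)\leq W(0)e^{-\nu^*t}$ by the comparison principle.

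Third, since $m(t)<0$, we have $V_1(t)+V_2(t)\leq W(t)\leq W(0)e^{-\nu^*t}$, and the norm equivalences \eqref{v1_bounds}--\eqref{v2_bounds} translate this into exponential decay of $\|(\tilde\alpha,\tilde\beta)^T\|^2+\|(\hat\alpha,\hat\beta)^T\|^2$ in the $L^2$ norm. Finally, using the bounded invertibility of the four backstepping transformations \eqref{err_backstepping1},\eqref{err_backstepping2},\eqref{err_backstepping1-inverse},\eqref{err_backstepping2-inverse},\eqref{obs_backstepping_1},\eqref{obs_backstepping_2},\eqref{backstepping_1-inverse},\eqref{backstepping_2-inverse}, this exponential decay transfers to the original states $(u,v,\hat u,\hat v)^T$ in the spatial $L^2$ norm.

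I do not expect a significant obstacle in this theorem itself, as essentially all of the analytical work was done when constructing $G(t)$ so that Lemma~\ref{lem:stc_gammac_m} holds. The only point worth verifying carefully is that the minimum dwell-time $\tau$ coming from the CETC analysis (Lemma~\ref{lem:zeno}) is still valid as a lower bound for the STC inter-event times; this is guaranteed by the outer $\max$ in \eqref{G}, which was inserted precisely for this purpose. Once that observation is made, the theorem reduces to citing Corollary~\ref{cor:existece}, Lemma~\ref{lem:stc_gammac_m}, and the Lyapunov computation of Proposition~\ref{propo:lyapunov stability}.
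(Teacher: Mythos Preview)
Your proposal is correct and matches the paper's own proof essentially step for step: the paper likewise notes that $G(t^s_k)\geq\tau$ excludes Zeno behavior, invokes Corollary~\ref{cor:existece} for well-posedness, cites Lemma~\ref{lem:stc_gammac_m} to obtain $\Gamma^c(t)\leq 0$ and $m(t)<0$, and then refers back to the Lyapunov argument of Proposition~\ref{propo:lyapunov stability} for exponential convergence. Your write-up is slightly more explicit about the Lyapunov computation and the bounded invertibility step, but the logical structure is identical.
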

\begin{proof}
    Using Corollary~\ref{cor:existece} and noting that according to the triggering rule \eqref{stc}-\eqref{rd} the time between two events is at least $\tau$, which excludes Zeno behavior, we can conclude that the system \eqref{sys_u}-\eqref{sys_BC1},\eqref{obs_u}-\eqref{obs_BC1},\eqref{U_d},\eqref{sys_BC2_d},\eqref{obs_BC2_d} has a unique solution $(u,v,\hat 
    {u}, \hat{v})^T \in \mathcal{C}^0(\mathbb{R}^+; L^2((0,\ell);\mathbb{R}^4))$. Following Lemma~\ref{lem:stc_gammac_m}, we know that $\Gamma^c(t)\leq0$ and $m(t)<0$ for all $t\in\mathbb{R}^+$. Therefore, using the same arguments used in Proposition~\ref{propo:lyapunov stability}, we can conclude that the closed-loop system \eqref{sys_u}-\eqref{sys_BC1},\eqref{obs_u}-\eqref{obs_BC1},\eqref{U_d},\eqref{sys_BC2_d},\eqref{obs_BC2_d}, exponentially converges to $0$ in the spatial $L^2$ norm. \hfill 
\end{proof}

\section{Boundary control of shallow water wave equations}\label{sec:model}
\emph{\textbf{Motivation.}} Remote hydraulic, pneumatic, or electric  actuation of water gates offers greater flexibility, convenience, and efficiency for the management of water flow  in large-scale irrigation systems or flood control networks\cite{saragih2020design,ushkov2023industrial,telaumbanua2023irrigation, siddula2018water}.  The integration of  remote real-time control stations, acting as a central hub   prevents water wastage and enhances other global performance measures for water distribution systems \cite{prodan2017distributed}.  We present ETC strategies for a single breach water canal that can be expanded to the remote control of networks of irrigation canals \cite{leugering2002modelling}. We consider a single canal operated by a control center located upstream, which uses the measured water height and velocity at the upstream sluice gate to control the downstream gate remotely as given in Fig.~\ref{fig:sv}.

\subsection{The Saint-Venant equations}
Firstly presented  in 1871 \cite{de1871theorie}, the equations describing shallow water dynamics in open canals hold an important place in hydraulic engineering, providing a mathematical representation that is  useful for simulating unsteady flow in open canals and  rivers. Additionally, these Saint-Venant equations have played a key role in the development of boundary controllers for coupled hyperbolic PDE systems.  The Saint-Venant model for a canal breach of unit width, which  expresses the conservation of  mass and momentum assuming that the depth of the incompressible  flow is much smaller than the horizontal length scale, is given below: 
\begin{figure}[t]
\centering
\includegraphics[width=0.45\textwidth]{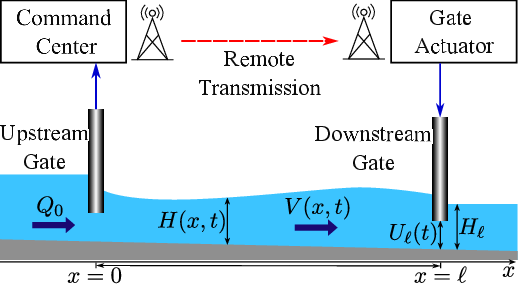}
\caption{Structure of the water canal.} \label{fig:sv}
\end{figure}

\begin{align}
  &\partial_t H+ \partial_x (HV) = 0,\label{sys_Height}\\
  &\partial_t V +\partial_x\bigg(\frac{V^2}{2}+gH\bigg) + \bigg( \frac{C_fV^{2} }{H} - gS_b\bigg) = 0,\label{sys_Velocity}
\end{align}
where $H(x,t)$ represents the water depth,  $V(x,t)$ is the horizontal water velocity, and $g$ is the constant acceleration of gravity. The term $ S_b $ is the constant bottom slope of the channel bed. The constant friction coefficient $C_f$ plays a central role in modulating the fluid's behavior under gravitational influence and resistance posed by the channel (see the extensive discussion of the possible equilibrium in \cite{HAYAT201952}). The constant equilibrium states of $H(x,t)$ and $V(x,t)$ are denoted by $H_{eq}$ and $V_{eq}$, respectively. Assume  that the flow rate upstream is a known constant $Q_0$, and an underflow sluice gate is used at the downstream boundary \cite{prabhata1992}, hence we obtain the following boundary conditions:
\begin{align}
    H(0,t)V(0,t)=&Q_0,\label{sys_boundary1}\\
    H(\ell,t)V(\ell,t)=&k_G\sqrt{2g}U_\ell(t)\sqrt{H(\ell,t)-H_\ell},\label{sys_boundary2}
\end{align}
where $k_G$ represents the constant discharge coefficient of the gate, $H_\ell$ is the constant water level beyond the gate, and $U_\ell$ is the gate opening height which can be controlled to regulate water dynamics in the canal. For the physical stationary states of interest, it is assumed that both $H_{eq}$ and $V_{eq}$ are positive. Let the following assumptions hold for the system.
\begin{assumption}\label{assum:slope}
    The constant bottom slope $S_b$ and the constant equilibrium states $H_{eq}$, $V_{eq}$ are such that
    \begin{align*}
        S_b=\frac{C_fV_{eq}^{2} }{gH_{eq}}.
    \end{align*}
\end{assumption}

\begin{assumption}\label{assum:subcritical}
    The steady states of the system, particularly in scenarios involving navigable rivers and fluvial regimes, the following subcritical condition holds:
    \begin{align*}
        gH_{eq}>V_{eq}^2.
    \end{align*}
\end{assumption}


Then, defining $\tilde H(x,t)=H(x,t)-H_{eq}$ and $\tilde V(x,t)=V(x,t)-V_{eq}$, and linearizing the equations \eqref{sys_Height},\eqref{sys_Velocity} around the steady-state, we obtain the following system:
\begin{align}
\begin{pmatrix}
\tilde H \\
\tilde V
\end{pmatrix}_{t}
+
\begin{pmatrix}
V_{eq} & H_{eq} \\
g & V_{eq}
\end{pmatrix}
\begin{pmatrix}
\tilde H \\
\tilde V
\end{pmatrix}
_{x}+
\begin{pmatrix}
0 & 0 \\
{f_H} & f_V
\end{pmatrix}
\begin{pmatrix}
\tilde H \\
\tilde V
\end{pmatrix}
= \mathbf{0}, \label{equ-uv-error}
\end{align}
where \( f_V \) and \( f_H \) are defined as
\begin{align*}
f_H := -\frac{C_fV_{eq}^{2}}{H_{eq}^{2}}, \quad f_V := \frac{2C_fV_{eq}}{H_{eq}}. 
\end{align*}
Let us denote 
\begin{align*}
M
=
\begin{pmatrix}
V_{eq} & H_{eq} \\
g & V_{eq}
\end{pmatrix}.
\end{align*}
 The matrix $M$ has two real distinct eigenvalues $\lambda_1$ and $-\lambda_2$, which under the subcritical condition in Assumption~\ref{assum:subcritical}, the following expressions hold:
\begin{align*}
\lambda_1 = V_{eq} + \sqrt{gH_{eq}}>0, \quad -\lambda_2 = V_{eq} - \sqrt{gH_{eq}}<0.
\end{align*}
We define the characteristic coordinates as follows
\begin{align}\label{equ-trans-xi}
\begin{pmatrix}
\xi_1 \\
\xi_2
\end{pmatrix}
=
\begin{pmatrix}
\sqrt{\frac{g}{H_{eq}}} & 1 \\
-\sqrt{\frac{g}{H_{eq}}} & 1
\end{pmatrix}
\begin{pmatrix}
\tilde H \\
\tilde V
\end{pmatrix}. 
\end{align}
Linearizing the boundary conditions \eqref{sys_boundary1},\eqref{sys_boundary2}  around the equilibrium point, considering \eqref{equ-uv-error}, and using the transformation \eqref{equ-trans-xi}, we can obtain the following system:
\begin{align}
&\begin{pmatrix}
\xi_1 \\
\xi_2
\end{pmatrix}_t
+
\begin{pmatrix}
\lambda_1 & 0 \\
0 & -\lambda_2
\end{pmatrix}
\begin{pmatrix}
\xi_1 \\
\xi_2
\end{pmatrix}_x
+
\begin{pmatrix}
\gamma_1 & \gamma_2 \\
\gamma_1 & \gamma_2
\end{pmatrix}
\begin{pmatrix}
\xi_1 \\
\xi_2
\end{pmatrix}
= {0}, \label{equ-xi}\\
&\xi_1(0,t) = \tilde{q} \xi_2(0,t),\label{equ0-xi}\\
&\xi_2(\ell,t) = \tilde{\rho} \xi_1(\ell,t) + \tilde{U}(t)\label{equl-xi}.
\end{align}
where $\gamma_1$, $\gamma_2$, $\tilde{q}$, $\tilde{\rho}$, and $\tilde{U}$ are given in Appendix~\ref{app:sv}. As the diagonal coefficients of the source term in \eqref{equ-xi} may bring complexity to the analysis of the stability, we then make a transformation  to remove the diagonal coefficients. We introduce the following new variables:
\begin{align*}
\begin{pmatrix} u \\ v \end{pmatrix} =\begin{pmatrix} e^{\frac{\gamma_1}{\lambda_1}x} & 0 \\ 0 & e^{-\frac{\gamma_2}{\lambda_2}x} \end{pmatrix} \begin{pmatrix} \xi_1 \\ \xi_2 \end{pmatrix}.
\end{align*}
The system \eqref{equ-xi}-\eqref{equl-xi} can now be expressed identical to \eqref{sys_u}-\eqref{sys_BC2} with $c_1(x)$, $c_2(x)$, $q$, $\rho$, and $U(t)$ as defined in Appendix~\ref{app:sv}. 
 It should be noted that although we have proven global exponential convergence for the CETC, PETC, and STC designs, since we linearize the Saint-Venant model, we can only obtain local exponential convergence for the system given by \eqref{sys_Height}-\eqref{sys_boundary2}.  

\subsection{Numerical Simulations}\label{sec:sim}
The simulations for CTC, CETC, PETC, and STC detailed in Sections~\ref{sec:problem_statement}-\ref{sec:stc} respectively are carried out for the one-dimensional shallow water equations given above. The model parameters are defined as $g=9.81 m/s^2$, $\ell=10 m$,  $C_f=0.2$, $H_{eq}=2 m$, $V_{eq}=1m/s$, and $H_\ell=0.1 m$ such that the boundary conditions satisfy Assumption~\ref{assum:reflection}. In addition, we define $k_G=0.6$. The initial conditions for the system are selected such that $H(x,0)=H_{eq} - \sin(\pi \frac{x}{\ell}) $, $V(x,0)=V_{eq} + 0.5\sin(3\pi\frac{x}{\ell})$ along with the initial conditions of the observer states in the characteristic coordinates as $\hat{u}(x,0)=0$, $\hat{v}(x,0)=0$. The selected initial conditions satisfy the boundary conditions \eqref{sys_BC1},\eqref{sys_BC2_d},\eqref{obs_BC1}, and \eqref{obs_BC2_d} with $U_k^\omega(0)=0$. The bounds in Assumption~\ref{assum:initial} are taken as $\phi_u=8.6872$ and $\phi_v=3.1664$.

Subsequently, consider the selection of the event-triggering parameters. Select $\mu=0.016$ such that \eqref{mu,delta} is satisfied. The tuning parameters $\delta<\mu$, $m^0<0$, $\eta>0$, $\theta>0$, and $\sigma\in(0,1)$ are selected as $0.014$ ,$-1$, $0.001$, $1$, and $0.99$ respectively. Thereafter we select $C=413.4211$ from \eqref{C} and $\bar{\delta}=10^{-4}$ such that $\varrho>0$. We obtain a minimum dwell-time of $\tau=0.13323s$ and the sampling period $0<h\leq\tau$ in the PETC design is selected as $h=0.13s$. The time and spatial step sizes used in PDE discretization are $\Delta t=0.0001 s$ and $\Delta x= 0.05 m$, respectively. The kernel PDEs are solved using the method proposed in \cite{anfinsen2019adaptive}[Appendix~F].

The variation of the $L^2$ norms of the characteristic coordinates $\|(u(\cdot,t),v(\cdot,t))^T\|$,  over time is depicted in Fig.~\ref{fig:norm} for the open-loop (OL) system and under the CTC, CETC, PETC, and STC mechanisms. The corresponding control inputs for the triggering mechanisms are shown in Fig.~\ref{fig:U}.  The dwell-times for CETC and PETC are shown in Fig.~\ref{fig:dwell_etc_petc}, and the dwell-times for STC are shown in Fig.~\ref{fig:dwell_stc}. It is apparent that the dwell-times for STC are much shorter than the dwell-times for CETC and PETC. This is because, unlike CETC and PETC, which trigger events upon the violation of a certain condition, the STC approach proactively computes the next event time by predicting the state evolution. This results in a more conservative sampling schedule. Due to frequent control updates, the closed-loop signals under STC follow a trajectory closer to those under CTC, as seen in Fig.~\ref{fig:norm}. In contrast, CETC and PETC approaches are less conservative in determining the triggering times; therefore, the norms converge to zero over a longer period but with less frequent control updates.

\begin{figure}[t]
\centering
\includegraphics[width=0.45\textwidth]{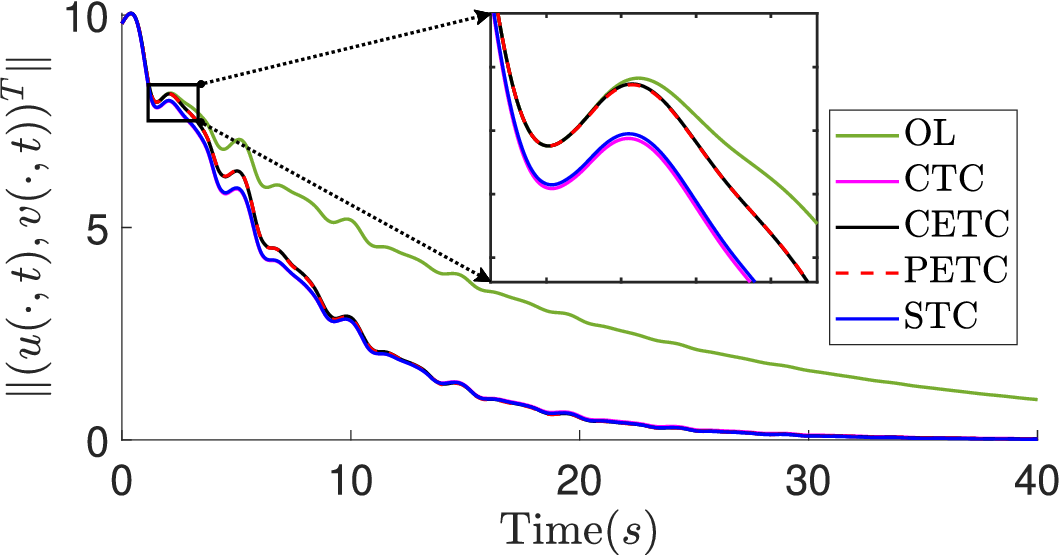}
\caption{Variation of the spatial $L^2$ norm of characteristic coordinates with time.} \label{fig:norm}
\end{figure}

\begin{figure}[t]
\centering
\includegraphics[width=0.45\textwidth]{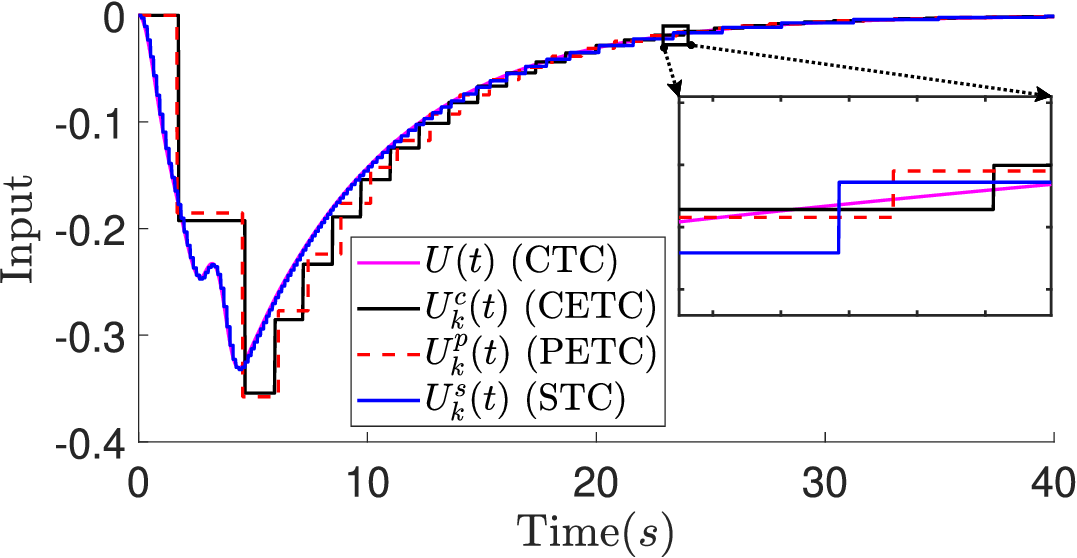}
\caption{Variation of the control inputs with time.} \label{fig:U}
\end{figure}

\begin{figure}[t]
\centering
\includegraphics[width=0.45\textwidth]{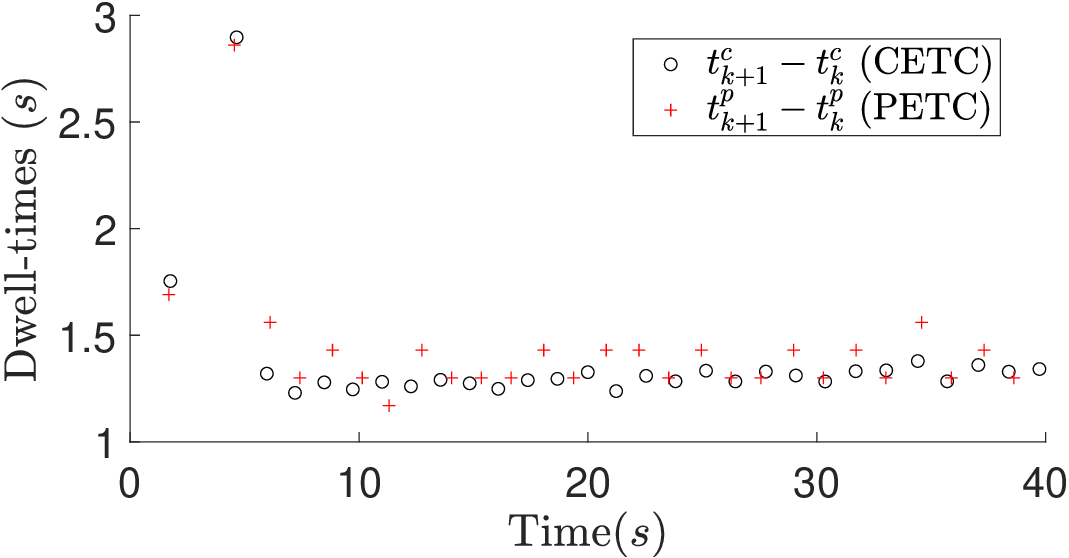}
\caption{Dwell-times under CETC and PETC.} \label{fig:dwell_etc_petc}
\end{figure}

\begin{figure}[t]
\centering
\includegraphics[width=0.45\textwidth]{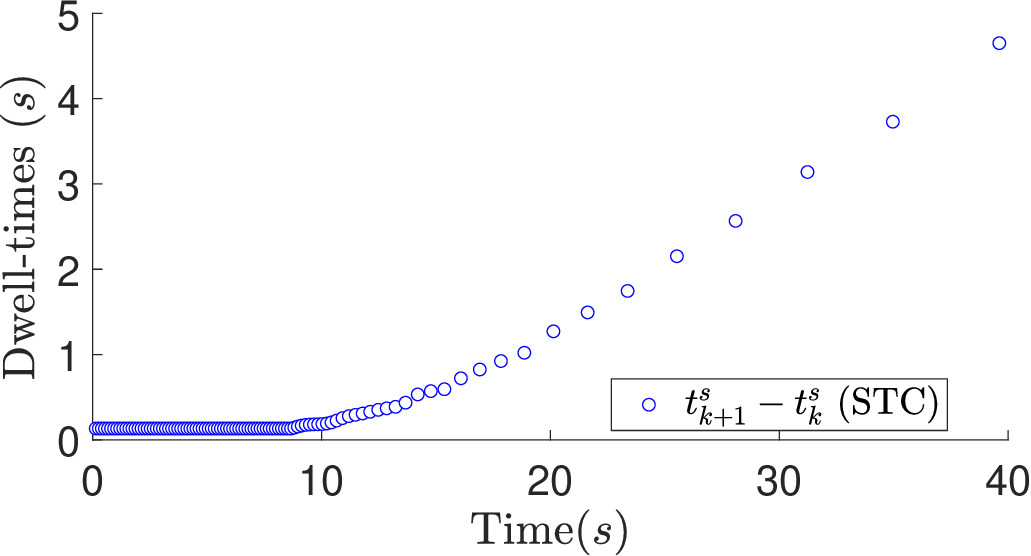}
\caption{Dwell-times under STC.} \label{fig:dwell_stc}
\end{figure}

\section{Concluding remarks}\label{sec:conclusion}
This article has presented an anti-collocated observer-based periodic event-triggered and self-triggered boundary control for a class of $2\times 2$ hyperbolic PDEs with reflection terms at the boundaries. The proposed PETC approach evaluates an appropriately designed function periodically to determine event times and is equipped with an explicitly defined upper bound for the sampling period. In contrast, the STC approach employs a positively lower-bounded function, which is evaluated at an event to determine the next event time. Both approaches eliminate the need for continuous monitoring of a triggering function required in the CETC approach while still preserving global exponential convergence to zero in the spatial $L^2$ norm. We have established the well-posedness of the closed-loop system under the proposed control strategies. The proposed control strategies have been employed to control the linearized Saint-Venant equations, which describe the flow of water in an open channel with a constant inflow of water at the upstream boundary and a sluice gate at the downstream boundary. Numerical simulations illustrating the performance of the proposed CETC, PETC, and STC mechanisms have been provided. Since the control input is transmitted only at event times, the communication bandwidth is utilized effectively and can be freed for other tasks.  This is beneficial in the context of controlling the flow in open channels where a network of canals is remotely controlled by a centralized system. Furthermore, while preserving the rate of  convergence of the system's dynamics,   event-based control strategies gradually open gates, thereby prolonging their life cycle, which is a significant advantage over continuous time control designs.  Our designs can be applied to PDE-ODE cascade models of mining cables, deep-sea construction vessels \cite{wang2022pde}, or oil drilling systems \cite{cai2016nonlinear,cai2020boundary}.

 \appendix
\section{Saint-Venant model parameters}\label{app:sv}
The parameters in \eqref{equ-xi}-\eqref{equl-xi} are defined as
\begin{align*}
    &\gamma_1 = \frac{f_H}{2}\sqrt{\frac{H_{eq}}{g}} + \frac{f_V}{2},\quad
    \gamma_2 = -\frac{f_H}{2}\sqrt{\frac{H_{eq}}{g}} + \frac{f_V}{2},\\
    &\tilde{q}=-\frac{\lambda_2}{\lambda_1},\quad
    \tilde{\rho}=\frac{Q_0-2\lambda_1(H_{eq}-H_\ell)}{Q_0+2\lambda_2(H_{eq}-H_\ell)},\\
    &\rho_u=\frac{4\sqrt{2}gk_G(H_{eq}-H_\ell)^{1.5}}{\sqrt{H_{eq}}(Q_0+2\lambda_2(H_{eq}-H_\ell))},\\
    &U_{eq}=\frac{Q}{k_G\sqrt{2g(H_{eq}-H_\ell)}},\quad
    \tilde{U}(t)=\rho_u(U_\ell(t)-U_{eq}).
\end{align*}
The parameters in \eqref{sys_u}-\eqref{sys_BC2} are defined as shown below to represent the linearized Saint-Venant equations in characteristic coordinates:
\begin{align*}
    &c_1(x)=-\gamma_2e^{\left(\frac{\gamma_1}{\lambda_1}+\frac{\gamma_2}{\lambda_2}\right)x},\quad
    c_2(x)=-\gamma_1e^{-\left(\frac{\gamma_1}{\lambda_1}+\frac{\gamma_2}{\lambda_2}\right)x},\\
    &q=\tilde{q},\quad
    \rho=\tilde{\rho}e^{-\left(\frac{\gamma_1}{\lambda_1}+\frac{\gamma_2}{\lambda_2}\right)\ell},\quad
    U(t)=\tilde{U}e^{\frac{-\gamma_2}{\lambda_2}\ell}.
\end{align*}     

\bibliographystyle{IEEEtranS}
\bibliography{refDataBase}

\end{document}